\newtheorem{theorem}{Theorem}[section]
\newtheorem{prop}{Proposition}[section]
\newcommand{\set}[1]{\lbrace #1 \rbrace}
\newcommand{\jump}[1]{\llbracket #1 \rrbracket}
\newcommand{\mean}[1]{\{#1\}}
\newcommand{\abs}[1]{\lvert#1\rvert}
\newcommand{\norm}[1]{\lVert#1\rVert}
\newcommand{\A}[1]{A\Bigl( #1 \Bigr)}
\newcommand{\B}[1]{B\Bigl( #1 \Bigr)}
\newcommand{\D}[1]{D\Bigl( #1 \Bigr)}
\newcommand{\Dh}[1]{D_h^{\texttt{a}}\Bigl( #1 \Bigr)}
\newcommand{\Lh}[1]{L_h\Bigl( #1 \Bigr)}
\newcommand{\n}{\boldsymbol{n}}
\newcommand\bu{\boldsymbol{u}}
\newcommand\bv{\boldsymbol{v}}
\newcommand\bw{\boldsymbol{w}}
\newcommand\bx{\boldsymbol{x}}
\newcommand\br{\boldsymbol{r}}
\newcommand\bs{\boldsymbol{s}}
\newcommand\bn{\boldsymbol{n}}
\newcommand\bF{\boldsymbol{f}}
\newcommand\bI{\boldsymbol{I}}
\newcommand\bT{\boldsymbol{T}}
\newcommand\0{\mathbf{0}}
\newcommand\bFF{\boldsymbol{F}}
\newcommand\bB{\boldsymbol{B}}
\newcommand\bW{\boldsymbol{W}}
\newcommand{\cF}{\mathcal{F}}
\newcommand\cC{\mathcal{C}}
\newcommand\cP{\mathcal{P}}
\newcommand\cT{\mathcal{T}}
\newcommand\bcQ{\boldsymbol{\mathcal{Q}}}
\newcommand\bcW{\boldsymbol{\mathcal{W}}}
\newcommand\bcS{\boldsymbol{\mathcal{S}}}
\newcommand\bcN{\boldsymbol{\mathcal{N}}}
\newcommand\beps{\boldsymbol{\varepsilon}}
\newcommand\bsig{\boldsymbol{\sigma}}
\newcommand\btau{\boldsymbol{\tau}}
\newcommand\bphi{\boldsymbol{\varphi}}
\newcommand\bbC{\mathbb{C}}
\newcommand\R{\mathbb{R}}
\renewcommand\H{\mathrm{H}}
\renewcommand\L{\mathrm{L}}
\renewcommand\S{\Sigma}
\renewcommand\O{\Omega}
\newcommand\DO{\partial\O}
\newcommand\G{\Gamma}
\newcommand\bdiv{\mathop{\mathbf{div}}\nolimits}
\renewcommand\div{\mathop{\mathrm{div}}\nolimits}
\newcommand\tD{\mathtt{D}}
\newcommand\tr{\mathop{\mathrm{tr}}\nolimits}
\renewcommand\sp{\mathop{\mathrm{sp}}\nolimits}
\renewcommand\t{\mathtt{t}}
\newcommand\LO{\L^2(\O)}
\newcommand\HdivO{{\H(\mathbf{div},\O)}}
\newcommand\HsdivO{{\H^s(\mathbf{div},\O)}}
\newcommand\HsO{\H^s(\O)}
\newcommand\nxn{3\times 3}
\newcommand\sigmar{(\bsig, \br)}
\newcommand\taus{(\btau, \bs)}
\newcommand\sigmarh{(\bsig_h, \br_h)}
\newcommand\taush{(\btau_h, \bs_h)}
\newcommand\taushc{(\btau_h^c, \bs_h)}
\newcommand\tausht{(\tilde{\btau}_h, \mathbf{0})}
\newcommand\sigmarbar{(\bar\bsig, \bar\br)}
\renewcommand\sp{\mathop{\mathrm{sp}}\nolimits}
\renewcommand\t{\mathtt{t}}
\newcommand\disp{\displaystyle}
\newcommand\qin{\qquad\hbox{in }}
\newcommand\qon{\qquad\hbox{on }}
\date{}
\title{A mixed discontinuous Galerkin method for the time harmonic 
elasticity  problem with reduced symmetry 
\thanks{Partially supported by  
Spain's Ministry of Education Project MTM2013-43671-P and the Australian 
Research Council Grant DP120101886.}}
\author{
{\sc Antonio M\'arquez}\thanks{Departamento de Construcci\'on e Ingenier\'\i a de
 Fabricaci\'on, Universidad de Oviedo, Oviedo, Espa\~na,
 e-mail: {\tt amarquez@uniovi.es}}, $\,\,$
{\sc Salim Meddahi}\thanks{Departamento de Matem\'aticas, Facultad de Ciencias,
Universidad de Oviedo, Calvo Sotelo s/n, Oviedo, Espa\~na,
e-mail: {\tt salim@uniovi.es}}  \, and\,  
{\sc Thanh Tran}\thanks{School of Mathematics and Statistics,
University of New South Wales, Sydney NSW 2052, Australia, e-mail: {\tt
thanh.tran@unsw.edu.au}}
}
\begin{document}

\maketitle

\begin{abstract}
The aim of this paper is to analyze a mixed discontinuous Galerkin 
discretization of the time-harmonic elasticity problem.  The symmetry of the Cauchy stress 
tensor is imposed weakly, as in the traditional dual-mixed setting. 
We show that the discontinuous Galerkin scheme is well-posed and uniformly stable 
with respect to the mesh parameter $h$ and the Lam\'e coefficient $\lambda$.  
We also derive optimal a-priori error bounds in the energy norm.  Several numerical tests 
are presented in order to illustrate the performance of the method and confirm the theoretical results.
\end{abstract}
\medskip

\noindent
\textbf{Mathematics Subject Classification.} 65N30, 65N12, 65N15,  74B10
\medskip

\noindent
\textbf{Keywords.} 
Mixed elasticity equations,  finite elements, discontinuous Galerkin methods, 
error estimates.

\pagestyle{myheadings}
\thispagestyle{plain}
\markboth{A. M\'ARQUEZ, S. MEDDAHI, T. TRAN}{A  discontinuous Galerkin method for an  
elasticity  problem}

\section{Introduction}\label{section:1}

In this paper we are interested in the dual-mixed formulation of the  elasticity problem 
with weakly imposed symmetry. We introduce and analyze a mixed interior penalty discontinuous Galerkin  
(DG) method for the elasticity system in time-harmonic regime. The interior penalty DG method can be traced back 
to \cite{arnoldIP, douglas} and its application for elliptic  problems is now well 
understood; see \cite{DiPietroErn} and the references cited therein for more details. 
The mixed interior penalty method introduced here can be viewed as 
a discontinuous version of the Arnold-Falk-Winther div-conforming finite element space \cite{afw-2007}. 
It approximates the unknowns of the mixed 
formulation, given by the Cauchy stress tensor and the rotation, by discontinuous finite element spaces 
of degree $k$ and $k-1$ respectively.  This permits one to enjoy 
the well-known flexibility properties of DG methods for $hp$-adaptivity and to implement 
high-order elements by using standard shape functions. Moreover, our scheme is immune 
to the locking phenomenon that arises in the nearly incompressible 
case.

The first step in our study of the mixed DG scheme consists in providing a   
convergence analysis for the corresponding div-conforming Galerkin method based on  
the Arnold-Falk-Winther element.  We point out that there are many finite element methods for the mixed formulation 
of the elasticity problem with reduced symmetry  \cite{afw-2007, BoffiBrezziFortin, 
bernardo1, gopala,  st}. All of them have been analyzed in the static case, 
i.e., in the case $\omega=0$ in problem \eqref{model} below.  
In time harmonic regime, the operator 
underlying the mixed formulation is not Fredholm of index zero as in the classical displacement-based formulation. 
The same challenge is encountered when analyzing the curl-conforming variational formulation of the Maxwell 
system \cite{buffa, gatica2}. Actually, the abstract theory given in \cite{buffa} can also be applied to the 
dual-mixed variational formulation of  linear elasticity as shown (implicitly) in  
the analysis given in \cite{gatica1} for a fluid-solid interaction problem. Instead of using this approach,  
we take here advantage of the recent spectral analysis obtained in \cite{MMR} to directly 
deduce the stability of the Arnold-Falk-Winther finite element approximation 
of the indefinite elasticity problem.

An interior penalty discontinuous Galerkin method has also been introduced in \cite{dominik1} 
for the  Maxwell system. The DG formulation we are considering here is, in a certain sense,  its 
counterpart in the $\textrm{H}(\div)$-setting. Notice that, in contrast to \cite{dominik1}, our approach does not rely 
on a duality technique. We prove the convergence of the DG scheme by exploiting  the stability   
of the corresponding div-conforming  method and without requiring further 
regularity assumption than the one needed to write properly the right-hand side of \eqref{DGshort} below.   
Moreover, if the analytic Cauchy stress tensor, its divergence and rotation belong to a 
Sobolev space with  regularity exponent $s>1/2$, then  it is shown that the error in the DG-energy 
norm converges with the optimal order $O(h^{\min(s, k)} )$ with respect to the mesh size $h$ and 
the polynomial degree $k$.

The paper is organized as follows. In Section \ref{section:2}, we recall the 
dual formulation of the linear elasticity problem with reduced symmetry and prove 
its well-posedness when the wave number is different from a countable set of 
singular values. In Section \ref{section:3} we prove the convergence of the 
conforming Galerkin scheme based on the Arnold-Falk-Winther element. In  
Section \ref{section:4}, we introduce the mixed interior penalty discontinuous 
Galerkin method and its convergence analysis is carried out in Section \ref{section:5}.
Finally, in Section \ref{section:6} we present numerical results that confirm 
the theoretical convergence estimates.

We end this section with some of the notations that we will  use below. Given
any Hilbert space $V$, let $V^3$ and $V^{\nxn}$ denote, respectively,
the space of vectors and tensors of order $3$  with
entries in $V$. In particular, $\bI$ is the identity matrix of
$\R^{\nxn}$ and $\mathbf{0}$ denotes a generic null vector or tensor. 
Given $\btau:=(\tau_{ij})$ and $\bsig:=(\sigma_{ij})\in\R^{\nxn}$, 
we define as usual the transpose tensor $\btau^{\t}:=(\tau_{ji})$, 
the trace $\tr\btau:=\sum_{i=1}^3\tau_{ii}$, the deviatoric tensor 
$\btau^{\tD}:=\btau-\frac{1}{3}\left(\tr\btau\right)\bI$, and the
tensor inner product $\btau:\bsig:=\sum_{i,j=1}^3\tau_{ij}\sigma_{ij}$. 

Let $\O$ be a polyhedral Lipschitz bounded domain of $\R^3$ with
boundary $\DO$. For $s\geq 0$, $\norm{\cdot}_{s,\O}$ stands indistinctly
for the norm of the Hilbertian Sobolev spaces $\HsO$, $\HsO^3$ or
$\HsO^{\nxn}$, with the convention $\H^0(\O):=\LO$. We also define for
$s\geq 0$ the Hilbert space 
$\HsdivO:=\set{\btau\in\HsO^{\nxn}:\ \bdiv\btau\in\HsO^3}$, whose norm
is given by $\norm{\btau}^2_{\HsdivO}
:=\norm{\btau}_{s,\O}^2+\norm{\bdiv\btau}^2_{s,\O}$ and denote
$\HdivO:={\H^0(\mathbf{div};\O)}$.

Henceforth, we denote by $C$ generic constants independent of the discretization
parameter, which may take different values at different places.

\section{The model problem}\label{section:2}

Let $\O\subset \R^3$ be an open bounded Lipschitz 
polyhedron representing a solid domain. We denote by $\bn$ the outward unit normal 
vector to  $\DO$ and assume that  $\DO=\G\cup\S$ with $\textrm{int}(\G)\cap \mathrm{int}(\S) = \emptyset$.  
The solid is supposed to be isotropic
and linearly elastic with mass density $\rho$ and Lam\'e constants $\mu$
and $\lambda$.  Under the hypothesis of small oscillations, 
the time-harmonic elastodynamic  equations  with angular frequency $\omega>0$ 
and body force $\bF:\O \to \R^3$ are given by 
\begin{subequations}\label{model}
\begin{align}
\bsig=\cC\beps(\bu) &\qin\O,\label{model-a}
 \\
 \bdiv\bsig+\rho \omega^2\,\bu=\bF &\qin\O,\label{model-b}
 \\
 \bsig\bn=\mathbf{0} &\qon\S,\label{model-c}
 \\
 \bu=\mathbf{0} &\qon\G,\label{model-d}
\end{align}
\end{subequations}
where $\bu:\O \to \R^3$ is the displacement field, $\beps(\bu):=\frac{1}{2}\left[\nabla\bu+(\nabla\bu)^{\t}\right]$
is the linearized strain tensor  and $\cC$ is the elasticity operator defined by 
\[
\cC\btau:=\lambda\left(\tr\btau\right)\bI+2\mu\btau. 
\]

Our aim is to introduce the Cauchy stress tensor $\bsig:\O \to \R^{\nxn}$ as a  
primary variable in the variational formulation of \eqref{model}. To this end, we consider the closed subspace 
of  $\HdivO$ given by 
\[
 \bcW := \set{ \btau \in \HdivO;\quad \text{$\btau\bn = \0$ on $\S$} }
\]
and the space of skew symmetric tensors
\[
 \bcQ:= \set{\bs \in \LO^{\nxn}; \quad \bs = - \bs^{\t}}.
\]
Introducing  the rotation $\br := \frac{1}{2}\left[\nabla\bu - (\nabla\bu)^{\t}\right]$,  the constitutive equation 
\eqref{model-a} can be rewritten as, 
\[
 \cC^{-1}\bsig = \nabla \bu - \br. 
\]
Testing the last identity with $\btau\in \bcW$, integrating by parts and using  the momentum equation 
\eqref{model-b} to eliminate the displacement $\bu$, we end up with the following mixed variational formulation of problem 
\eqref{model}:
\begin{subequations}\label{varForm}
\text{find $\bsig\in \bcW$ and $\br\in \bcQ$ such that}
\begin{align}
\int_{\O} \bdiv \bsig \cdot \bdiv \btau - \kappa^2 \left( \int_{\O} \cC^{-1}\bsig:\btau + \int_{\O}\br:\btau\right) 
= \int_{\O} \bF\cdot \bdiv \btau & \quad \forall \btau\in \bcW\label{varForm-a}
\\
\int_{\O} \bsig:\bs = \0 & \quad \forall \bs \in \bcQ\label{varForm-b},
\end{align}
\end{subequations}
where the wave number $\kappa$  is given by $\sqrt{\rho}\,\omega$. 
We notice that equation \eqref{varForm-b} is a restriction that imposes weakly the symmetry of $\bsig$, and $\br$ 
is the corresponding Lagrange multiplier. We also point out that the dual formulation \eqref{varForm} 
degenerates as $\omega\to 0$. The static case $\omega=0$ is then not covered by our analysis.

We introduce the symmetric bilinear forms 
\[
 \B{\sigmar, \taus}:= \int_{\O} \cC^{-1}\bsig:\btau + \int_{\O}\br:\btau + \int_{\O}\bs:\bsig
\]
and
\[
 \A{\sigmar, \taus} := \int_{\O} \bdiv \bsig \cdot \bdiv \btau + \B{\sigmar, \taus}
\]
and denote the product norm on $\HdivO\times \L^2(\O)$ by
\[
 \norm{\taus}^2 := \norm{\btau}^2_{\HdivO} + \norm{\bs}^2_{0,\O}.
\]

\begin{prop}\label{infsupA-cont}
 There exists a constant $\alpha_A>0$, depending on $\mu$ and $\O$ (but not on $\lambda$), such that 
 \begin{equation}\label{infsupa}
  \sup_{\taus\in \bcW\times \bcQ} \frac{\A{\sigmar, \taus}}{\norm{\taus}} \geq \alpha_A \norm{\sigmar}\quad \forall \sigmar \in 
  \bcW\times \bcQ.
 \end{equation}
\end{prop}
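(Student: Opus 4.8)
The plan is to read \eqref{infsupa} as the well-posedness of the symmetric saddle-point form $\A{\sigmar,\taus}=a(\bsig,\btau)+b(\btau,\br)+b(\bsig,\bs)$, where $a(\bsig,\btau):=\int_{\O}\bdiv\bsig\cdot\bdiv\btau+\int_{\O}\cC^{-1}\bsig:\btau$ and $b(\btau,\bs):=\int_{\O}\btau:\bs$. Rather than quoting abstract Brezzi theory, I would, given $\sigmar\in\bcW\times\bcQ$, exhibit one test pair $\taus=(\bsig+\delta\btau_{\br},-\br)$, with $\delta>0$ a small fixed number and $\btau_{\br}\in\bcW$ an auxiliary tensor attached to $\br$, such that $\norm{\taus}\le C\norm{\sigmar}$ and $\A{\sigmar,\taus}\ge c\,\norm{\sigmar}^{2}$ with $c$ and $C$ independent of $\lambda$; dividing then gives \eqref{infsupa}.

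The argument rests on two classical ingredients. First, the control of the full $\HdivO$-norm by the deviatoric part: there is $C_{1}>0$ (depending on $\O$, on neither $\lambda$ nor $\mu$) with $\norm{\btau}_{0,\O}\le C_{1}(\norm{\btau^{\tD}}_{0,\O}+\norm{\bdiv\btau}_{0,\O})$ for all $\btau\in\bcW$; this estimate underlies dual-mixed elasticity and crucially uses the homogeneous normal condition on $\S$ built into $\bcW$ (see e.g. \cite{afw-2007,BoffiBrezziFortin}). Second, the inf-sup condition for the symmetry constraint: there is $\beta>0$ such that every $\br\in\bcQ$ admits $\btau_{\br}\in\bcW$ with $\norm{\btau_{\br}}_{\HdivO}\le C_{2}\norm{\br}_{0,\O}$ and $\int_{\O}\btau_{\br}:\br\ge\beta\norm{\br}_{0,\O}^{2}$ --- i.e. the surjectivity of $\btau\mapsto\frac12(\btau-\btau^{\t})$ from $\bcW$ onto $\bcQ$, proved as usual via an auxiliary elliptic problem. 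I also record the two $\lambda$-uniform facts $\int_{\O}\cC^{-1}\bsig:\bsig=\frac{1}{2\mu}\norm{\bsig^{\tD}}_{0,\O}^{2}+\frac{1}{3(3\lambda+2\mu)}\norm{\tr\bsig}_{0,\O}^{2}\ge\frac{1}{2\mu}\norm{\bsig^{\tD}}_{0,\O}^{2}$ and $\norm{\cC^{-1}\bsig}_{0,\O}\le\frac{1}{2\mu}\norm{\bsig}_{0,\O}$, obtained from the diagonalization of $\cC$ on the deviatoric/spherical splitting.

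The assembly goes as follows. Testing with $(\bsig,-\br)$ cancels the two coupling terms and leaves $\A{\sigmar,(\bsig,-\br)}=\norm{\bdiv\bsig}_{0,\O}^{2}+\int_{\O}\cC^{-1}\bsig:\bsig\ge\norm{\bdiv\bsig}_{0,\O}^{2}+\tfrac{1}{2\mu}\norm{\bsig^{\tD}}_{0,\O}^{2}$. Testing with $(\btau_{\br},\0)$ gives $\A{\sigmar,(\btau_{\br},\0)}\ge\beta\norm{\br}_{0,\O}^{2}-C\norm{\bsig}_{\HdivO}\norm{\br}_{0,\O}$, by Cauchy--Schwarz together with the bounds on $\norm{\cC^{-1}\bsig}_{0,\O}$ and on $\norm{\btau_{\br}}_{\HdivO}$. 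I then add $\delta$ times the second estimate to the first, replace $\norm{\bsig}_{\HdivO}$ by $C(\norm{\bsig^{\tD}}_{0,\O}+\norm{\bdiv\bsig}_{0,\O})$ via the first ingredient, absorb the resulting cross term with Young's inequality, and fix $\delta$ small enough --- depending only on $\mu$, $\beta$, $C_{1}$, $C_{2}$, hence not on $\lambda$ --- so that the coefficients of $\norm{\bdiv\bsig}_{0,\O}^{2}$ and $\norm{\bsig^{\tD}}_{0,\O}^{2}$ stay strictly positive. This yields $\A{\sigmar,\taus}\ge c_{0}(\norm{\bdiv\bsig}_{0,\O}^{2}+\norm{\bsig^{\tD}}_{0,\O}^{2}+\norm{\br}_{0,\O}^{2})$, and a last use of the first ingredient turns $\norm{\bdiv\bsig}_{0,\O}^{2}+\norm{\bsig^{\tD}}_{0,\O}^{2}$ into a multiple of $\norm{\bsig}_{\HdivO}^{2}$. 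Since moreover $\norm{\taus}\le\norm{\bsig}_{\HdivO}+\delta C_{2}\norm{\br}_{0,\O}+\norm{\br}_{0,\O}\le C\norm{\sigmar}$, \eqref{infsupa} follows.

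The only genuinely delicate point is the uniformity in $\lambda$: the coercivity supplied by $\int_{\O}\cC^{-1}\bsig:\bsig$ controls only $\norm{\bsig^{\tD}}_{0,\O}$ and degenerates on the spherical part as $\lambda\to\infty$, so $\norm{\tr\bsig}_{0,\O}$ must be recovered exclusively through the $\HdivO$-estimate of the first ingredient and never through $\cC^{-1}$; the remainder is the routine balancing of the perturbation parameter $\delta$. Equivalently, one may package the same computation as a verification of Brezzi's two conditions for the pair $(a,b)$ on $\bcW\times\bcQ$: $\lambda$-uniform coercivity of $a$ on $\Ker b=\set{\btau\in\bcW:\btau=\btau^{\t}}$ (from the first ingredient and the deviatoric lower bound) and the inf-sup of $b$ (the second ingredient).
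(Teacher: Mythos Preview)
Your proof is correct and rests on the same two ingredients as the paper --- the $\lambda$-uniform coercivity of $a$ on $\bcW$ (the paper quotes this as \eqref{elip0} from \cite{MMR}; your deviatoric--divergence estimate is the equivalent statement) and the inf-sup condition for $b$. The only difference is that the paper invokes the Babu\v{s}ka--Brezzi theory abstractly after recording these two conditions, whereas you unpack that step by hand via the explicit test pair $(\bsig+\delta\btau_{\br},-\br)$; as your final paragraph already notes, the two routes are the same argument.
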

\begin{proof}
It is important to notice that the bilinear form  
\begin{equation}
\label{invcCop}
\int_{\O}\cC^{-1}\bsig:\btau = \frac{1}{2\mu}\int_{\O}\bsig^{\tD}:\btau^{\tD} + \frac{1}{3(3\lambda + 2\mu)} \int_{\O}(\tr\bsig)(\tr\btau)
\end{equation}
is bounded by a constant independent of $\lambda$ when $\lambda$ is too large in comparison
with $\mu$. Moreover, it is shown in \cite[Lemma 2.1]{MMR} that there exists a 
constant $\alpha_0>0$, depending on $\mu$ and $\O$ (but not on $\lambda$), such that 
\begin{equation}\label{elip0}
\int_{\O} \cC^{-1}\btau:\btau + \int_{\O} \bdiv \btau \cdot \bdiv \btau 
\geq\alpha_0\norm{\btau}^2_{\HdivO} 
\qquad\forall\btau\in\bcW.
\end{equation}

On the other hand, there exists a constant $\beta>0$ depending only on $\O$ 
(see, for instance, \cite{BoffiBrezziFortin}) such that 
 \[
  \sup_{\btau\in \bcW} \frac{\int_{\O}\bs:\btau}{\norm{\btau}_{\HdivO}} \geq \beta \norm{\bs}_{0,\O}, \qquad 
\forall \bs\in \bcQ.  
 \]

The Babu\v{s}ka-Brezzi theory  
shows  that, for any bounded linear form $L\in \mathcal{L}(\bcW\times \bcQ)$, the 
problem: find $\sigmar\in \bcW\times \bcQ$ such that 
\[
 \A{\sigmar, \taus} = L\big(\taus\big)\qquad \forall \taus \in \bcW\times \bcQ,
\]
is well-posed, which proves  \eqref{infsupa}.
\end{proof}

We deduce from Proposition \ref{infsupA-cont} and the symmetry of 
$A(\cdot, \cdot)$ that the operator $\bT: \bcW\times \bcQ \to \bcW\times \bcQ$ 
characterized by 
\[
 \A{\bT\sigmar, \taus} = \B{\sigmar, \taus} \quad \forall \taus \in \bcW\times \bcQ
\]
is well-defined and bounded. It is clear that,  for a given wave number $\kappa>0$, $\sigmar\neq 0$ 
is a solution to the homogeneous version of problem \eqref{varForm} if and only if
$\left( \eta=\frac{1}{1+\kappa^2}, \sigmar\right)$ is an eigenpair for $\bT$. The following characterization 
of the spectrum of $\bT$  will  be useful for our analysis. 
\begin{prop}\label{specT}
 The spectrum $\sp(\bT)$ of $\bT$ decomposes as follows 
 \[
  \sp(\bT) = \set{0, 1} \cup \set{\eta_k}_{k\in \mathbb{N}}
 \]
where $\set{\eta_k}_k$ is a real sequence of  
finite-multiplicity eigenvalues of  $\bT$ which converges to 0. 
Moreover, $\eta=1$ is an infinite-multiplicity eigenvalue of $\bT$ while $\eta=0$ is not 
an eigenvalue. 
\end{prop}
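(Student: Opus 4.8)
The plan is to analyze the operator $\bT$ by decomposing the space $\bcW\times\bcQ$ according to the structure of the bilinear forms $\A{\cdot,\cdot}$ and $\B{\cdot,\cdot}$, and then invoke spectral theory for self-adjoint compact(-modulo-kernel) operators. First I would verify that $\bT$ is self-adjoint with respect to the inner product induced by $\A{\cdot,\cdot}$ (which, by Proposition \ref{infsupA-cont} together with the symmetry and boundedness of $A$, is equivalent to the product norm on $\bcW\times\bcQ$): this is immediate from the symmetry of both $A$ and $B$, since $\A{\bT\sigmar,\taus}=\B{\sigmar,\taus}=\B{\taus,\sigmar}=\A{\bT\taus,\sigmar}=\A{\sigmar,\bT\taus}$. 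Hence $\sp(\bT)\subset\R$ and the only possible accumulation point of eigenvalues is governed by the essential spectrum.

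Next I would identify $\Ker\bT$ and the eigenspace for $\eta=1$. For $\eta=0$: if $\bT\sigmar=\0$ then $\B{\sigmar,\taus}=0$ for all $\taus$; testing with $\taus=\sigmar$ gives $\int_\O\cC^{-1}\bsig:\bsig+2\int_\O\br:\bsig=0$, and testing \eqref{varForm-b}-style with skew tensors forces $\bsig$ symmetric, so $\int_\O\br:\bsig=0$ and then $\int_\O\cC^{-1}\bsig:\bsig=0$ gives $\bsig=\0$; finally the inf-sup condition for $\int_\O\bs:\btau$ over $\bcW$ forces $\br=\0$. Thus $0$ is not an eigenvalue. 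For $\eta=1$: $\bT\sigmar=\sigmar$ means $\A{\sigmar,\taus}=\B{\sigmar,\taus}$, i.e. $\int_\O\bdiv\bsig\cdot\bdiv\btau=0$ for all $\btau\in\bcW$; taking $\btau=\sigmar$ shows $\bdiv\bsig=\0$, so the eigenspace for $\eta=1$ is $\set{\sigmar\in\bcW\times\bcQ:\ \bdiv\bsig=\0}$, which is infinite-dimensional (it contains, e.g., $(\0,\bs)$ for every $\bs\in\bcQ$). Conversely, every such pair is fixed by $\bT$.

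The heart of the argument is then a compactness statement on the orthogonal complement. I would decompose $\bcW\times\bcQ = \cN \oplus \cN^\perp$ where $\cN$ is the $\eta=1$ eigenspace just described, the orthogonality being in the $A$-inner product; note $\cN^\perp$ consists of pairs that are $A$-orthogonal to all divergence-free elements. On $\cN^\perp$ the operator $\widetilde\bT:=\bT|_{\cN^\perp}$ is self-adjoint, and I claim it is compact: indeed $\B{\sigmar,\taus}$ involves only $\L^2$-type pairings of $\bsig,\btau$ (no divergence), so writing $\bT$ through the Riesz representation in the $A$-norm, the gain comes from the fact that on $\cN^\perp$ control of $\bsig$ in $\L^2$ plus the $A$-elliptic estimate \eqref{elip0} yields control of $\bdiv\bsig$, and the embedding $\HdivO\cap\{\text{appropriate constraint}\}\hookrightarrow\LO^{\nxn}$ is compact — more directly, $B$ is compact as a bilinear form relative to $A$ because it does not see the $\bdiv$-component, so $\bT$ differs from the projection onto $\cN$ by a compact operator. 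By the spectral theorem for compact self-adjoint operators, $\sp(\widetilde\bT)$ is a countable set of finite-multiplicity real eigenvalues $\set{\eta_k}$ accumulating only at $0$. Assembling, $\sp(\bT)=\set{0}\cup\set{1}\cup\set{\eta_k}_k$ with the stated multiplicities, $0$ not an eigenvalue, $1$ of infinite multiplicity.

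The main obstacle I anticipate is making the compactness of $\bT$ on $\cN^\perp$ rigorous: one must show that controlling the $\bsig$-component in $\L^2$ on $\cN^\perp$ actually controls $\bdiv\bsig$, so that bounded sequences in $\cN^\perp$ have $\bT$-images precompact in the full $\HdivO\times\L^2$ norm. This is where \eqref{elip0} and the definition of $\cN^\perp$ (forcing a Helmholtz-type splitting in which the divergence-free part is the $\eta=1$ piece and is quotiented out) must be combined carefully; alternatively one can cite the Fredholm/compactness framework already set up in \cite{MMR}, which is the route the authors most likely take given the earlier remark that they "directly deduce" stability from the spectral analysis there.
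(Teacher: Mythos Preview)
The paper's proof is simply a citation: ``See \cite[Theorem 3.7]{MMR}.''  You correctly anticipate this in your closing sentence, so in that sense your proposal is consistent with what the authors actually do.

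Your direct argument, however, has a genuine gap at the very first step.  The bilinear form $A(\cdot,\cdot)$ is \emph{not} an inner product on $\bcW\times\bcQ$: it is a saddle-point form.  For instance, taking $(\btau,\bs)=(\0,\bs)$ with $\bs\neq\0$ gives
\[
\A{(\0,\bs),(\0,\bs)} = \int_\O |\bdiv\0|^2 + \int_\O \cC^{-1}\0:\0 + 2\int_\O \bs:\0 = 0,
\]
and one can also produce strictly negative values by choosing a symmetric $\btau$ with small $\|\btau\|_{\HdivO}$ and a skew $\bs$ making $2\int_\O\bs:\btau$ large and negative.  The inf-sup condition in Proposition~\ref{infsupA-cont} guarantees nondegeneracy of $A$, not positive definiteness; a symmetric, bounded, nondegenerate bilinear form need not be an inner product.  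Consequently your claim that $\bT$ is self-adjoint in ``the $A$-inner product'' does not yield real spectrum, and the object ``$\cN^\perp$ in the $A$-inner product'' on which you restrict $\bT$ is not well defined.  The spectral theorem for compact self-adjoint operators is therefore not available along this route.

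Your identification of the eigenspaces for $\eta=0$ and $\eta=1$ is correct and does not rely on the faulty inner-product structure.  To repair the rest, one has to work as in \cite{MMR}: there the authors relate $\bT$ to a genuinely self-adjoint, compact solution operator associated with the source elasticity problem (formulated with the displacement recovered), and transfer the spectral decomposition back to $\bT$.  The compactness comes from elliptic regularity / Rellich on the displacement side rather than from any coercivity of $A$ on $\bcW\times\bcQ$, which is precisely the ``main obstacle'' you flag but do not resolve.
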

\begin{proof}
See \cite[Theorem 3.7]{MMR}.
\end{proof}

\begin{theorem}\label{wellposed}
If  $\frac{1}{1+\kappa^2}\notin \sp(\bT)$, then \eqref{varForm}
is well-posed. Moreover, there exists a constant $C>0$ independent of $\lambda$ 
such that, for any $\bF\in \LO^3$, the solution $(\bsig, \br)\in \bcW\times \bcQ$ 
of \eqref{varForm} satisfies
\begin{equation}\label{estimsigr}
 \norm{\sigmar} \leq C \norm{\bF}_{0,\O}.
\end{equation}
\end{theorem}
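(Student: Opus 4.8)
The plan is to recast problem \eqref{varForm} as a resolvent equation for the operator $\bT$ and then invoke the Fredholm alternative together with the spectral decomposition in Proposition \ref{specT}. First I would observe that, by Proposition \ref{infsupA-cont}, the bilinear form $A(\cdot,\cdot)$ induces an isomorphism $\mathcal{A}:\bcW\times\bcQ\to(\bcW\times\bcQ)'$, so the right-hand side functional $\taus\mapsto \int_\O \bF\cdot\bdiv\btau$ can be represented by a unique element $\bF^\ast\in\bcW\times\bcQ$ with $A(\bF^\ast,\taus)=\int_\O\bF\cdot\bdiv\btau$ and $\norm{\bF^\ast}\le \alpha_A^{-1}\norm{\bF}_{0,\O}$. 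Rewriting \eqref{varForm} as $A(\sigmar,\taus)-\kappa^2 B(\sigmar,\taus)=A(\bF^\ast,\taus)$ for all $\taus$, and using the definition $A(\bT\sigmar,\taus)=B(\sigmar,\taus)$, this is equivalent to $A\bigl((I-\kappa^2\bT)\sigmar,\taus\bigr)=A(\bF^\ast,\taus)$ for all $\taus$, hence — since $\mathcal A$ is injective — to the operator equation $(I-\kappa^2\bT)\sigmar=\bF^\ast$ in $\bcW\times\bcQ$. Dividing by $1+\kappa^2$, this reads $\bigl(\tfrac{\kappa^2}{1+\kappa^2}\,\bT^\perp\bigr)$-type identity; more directly, $(I-\kappa^2\bT)\sigmar=\bF^\ast$ is equivalent to $(\eta I-\bT)\sigmar=\eta\,\bF^\ast$ with $\eta=\tfrac{1}{1+\kappa^2}$ (after multiplying through by $\eta$), so solvability amounts to invertibility of $\eta I-\bT$.

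The second step is to argue that $\bT$ is a compact perturbation of a multiple of the identity, or rather to use the structure of $\sp(\bT)$ directly. Since $\eta=\frac{1}{1+\kappa^2}\notin\sp(\bT)$ by hypothesis, the operator $\eta I-\bT$ is boundedly invertible by the very definition of the resolvent set, so $\sigmar=\eta(\eta I-\bT)^{-1}\bF^\ast$ is the unique solution, and
\[
\norm{\sigmar}\le \abs{\eta}\,\norm{(\eta I-\bT)^{-1}}\,\norm{\bF^\ast}\le \frac{\abs{\eta}}{\alpha_A}\,\norm{(\eta I-\bT)^{-1}}\,\norm{\bF}_{0,\O}.
\]
Here $\norm{(\eta I-\bT)^{-1}}$ is a finite constant depending on $\kappa$ and on $\bT$; to see that it does not depend on $\lambda$, I would note that $\bT$ is defined through $A$ and $B$, whose $\lambda$-dependence is only through the term $\int_\O\cC^{-1}\bsig:\btau$, which by \eqref{invcCop} converges (boundedly, and monotonically in the relevant range) as $\lambda\to\infty$; combined with the $\lambda$-independent coercivity constant $\alpha_0$ of \eqref{elip0} and the $\lambda$-independent inf-sup constant $\beta$, one gets $\lambda$-uniform bounds on $\norm{\bT}$ and, away from $\sp(\bT)$, on $\norm{(\eta I-\bT)^{-1}}$. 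Thus $C=\frac{\abs{\eta}}{\alpha_A}\sup_\lambda\norm{(\eta I-\bT)^{-1}}$ works in \eqref{estimsigr}.

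The main obstacle is precisely the claim that $C$ can be chosen independent of $\lambda$. Bounded invertibility of $\eta I-\bT$ for a single fixed $\lambda$ is immediate from $\eta\notin\sp(\bT)$, but to get a $\lambda$-uniform bound one must control how $\bT=\bT_\lambda$ varies with $\lambda$. The cleanest route is to show that $\bT_\lambda\to\bT_\infty$ in operator norm as $\lambda\to\infty$ (using \eqref{invcCop} and the uniform bounds above), so that the map $\lambda\mapsto\norm{(\eta I-\bT_\lambda)^{-1}}$ extends continuously to $\lambda=\infty$ and is therefore bounded on $[\lambda_0,\infty]$ for the range of physical interest; for $\lambda$ in a bounded set the bound is trivial by continuity and the fact that $\eta$ stays in the resolvent set (this last point itself relies on the $\lambda$-independence of $\sp(\bT)$ asserted implicitly by Proposition \ref{specT}, whose eigenvalues $\eta_k$ are $\lambda$-independent). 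Everything else — the reduction to the operator equation, the Fredholm argument, the representation of the load — is routine functional analysis once Proposition \ref{infsupA-cont} and Proposition \ref{specT} are in hand.
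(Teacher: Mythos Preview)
Your approach is essentially the paper's: represent the load via the $A$-inf-sup as an element $\bF^\ast$ (the paper calls it $(\bar\bsig,\bar\br)$), reduce \eqref{varForm} to a resolvent equation for $\bT$, and invert $\eta\bI-\bT$ using the hypothesis $\eta=\tfrac{1}{1+\kappa^2}\notin\sp(\bT)$. There is, however, an algebraic slip in your reduction: since $A=\int_\O\bdiv\bsig\cdot\bdiv\btau+B$, combining \eqref{varForm-a}--\eqref{varForm-b} gives
\[
A\bigl(\sigmar,\taus\bigr)-(1+\kappa^2)\,B\bigl(\sigmar,\taus\bigr)=A\bigl(\bF^\ast,\taus\bigr),
\]
not $A-\kappa^2 B$; consequently the operator equation is $\bigl(\bI-(1+\kappa^2)\bT\bigr)\sigmar=\bF^\ast$, and only then does multiplication by $\eta$ yield the $(\eta\bI-\bT)\sigmar=\eta\,\bF^\ast$ you state. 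With this correction your argument and the paper's coincide verbatim, down to the formula $\sigmar=\tfrac{1}{1+\kappa^2}\bigl(\tfrac{\bI}{1+\kappa^2}-\bT\bigr)^{-1}\bF^\ast$. For the $\lambda$-independence of the resolvent bound the paper simply records it as a fact inherited from the spectral analysis in \cite{MMR}, whereas you sketch an operator-norm continuity argument in $\lambda$; either route is acceptable, and yours is more self-contained.
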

\begin{proof}
Let us first recall that, given $z\in\bbC\setminus\set{\sp(\bT)}$, the resolvent  
 \[
  (z\bI-\bT)^{-1} :\bcW\times \bcQ\longrightarrow\bcW\times \bcQ  
 \]
is  bounded in $\mathcal{L}(\bcW\times \bcQ)$ by a constant $C$ only depending 
on $\O$ and $\abs{z}$.  

We deduce from \eqref{infsupa} and the symmetry of $A(\cdot, \cdot)$ 
that the problem: find $(\bar\bsig, \bar\br)\in \bcW\times \bcQ$ solution of 
\[
 \A{ \sigmarbar, \taus } = 
 \int_{\O} \bF\cdot \bdiv \btau,  \quad \forall \taus\in \bcW\times \bcQ,
\]
is well-posed. The solution of problem \eqref{varForm} is then given by 
\[
 \sigmar = \frac{1}{1+\kappa^2} \Big( \frac{\bI}{1+\kappa^2}  - \bT \Big)^{-1} \sigmarbar,
\]
and \eqref{estimsigr} follows from   the boundedness of 
$\Big( \frac{\bI}{1+\kappa^2}  - \bT \Big)^{-1}$.
\end{proof}

\section{A continuous Galerkin discretization}\label{section:3}

We consider  shape regular affine meshes $\mathcal{T}_h$ that subdivide the domain $\bar \Omega$ into  
tetrahedra $K$ of diameter $h_K$. The parameter $h:= \max_{K\in \cT_h} \{h_K\}$ 
represents the mesh size of $\cT_h$. Hereafter, given an integer $m\geq 0$ and a domain 
$D\subset \mathbb{R}^3$, $\cP_m(D)$ denotes the space of polynomials of degree at most $m$ on $D$.
The space of piecewise polynomial functions of degree at most $m$ relatively to $\cT_h$ is denoted by  
\[
 \cP_m(\cT_h) :=\set{ v\in L^2(\O); \quad v|_K \in \cP_m(K),\quad \forall K\in \cT_h }. 
\]

For any $k\geq 1$, we consider the finite element spaces
\[
 \bcW_h^c := \cP_k(\cT_h)^{\nxn} \cap \bcW 
 \qquad \text{and} \qquad \bcQ_h := \cP_{k-1}(\cT_h)^{\nxn} \cap \bcQ.
\]

Let us now recall some well-known properties of the Brezzi-Douglas-Marini (BDM) 
mixed finite element \cite{BDM}. Let $\hat K$ be a fixed reference tetrahedron. Given $K\in \cT_h$, 
there exists an affine and bijective map 
$\bFF_K:\, \hat K \to \R^3$ such that $\bFF_K(\hat K) = K$. We consider $\bB_K:= \nabla \bFF_K$ 
and define 
\[
 \bcN_{k}(K):= \set{\bw:\Omega \to \mathbb{R}^3;\quad \bw\circ \bFF_K = \bB_K^{-\t} \check{\bw}; \quad \check \bw \in \cP_{k-1}(\hat K)^3 \oplus 
 \bcS_k(\hat K)}
\]
where 
\[
 \bcS_k(\hat K) := \set{\check \bw \in \tilde{\cP}_k(\hat K)^3; \quad \check \bw \cdot \hat{\bx} = 0}
\]
with $\tilde{\cP}_k(\hat K)$ representing the space of homogeneous polynomials of total degree exactly $k$ in $\hat \bx\in 
\hat K$.

A polynomial $\bv\in \cP_k(K)^3$ is uniquely determined by the set of BDM degrees of freedom
\begin{align}
 m_\phi(\bv) &:= \int_F\bv\cdot \bn_K \phi \qquad\text{for all $\phi\in \cP_k(F)$, for all $F\in \cF(K)$}\label{DFa}
 \\
 m_{\bw}(\bv) &:=  \int_K \bv \cdot \bw \qquad\text{for all $\bw\in \bcN_{k-1}(K)$}\label{DFb},
\end{align} 
where $\bn_K$ is the outward unit normal vector to $\partial K$. 
Conditions \eqref{DFb} are avoided in the case $k=1$.

Let us consider an arbitrary, but fixed, orientation of all internal faces $F$ of $\cT_h$   
by  normal vectors $\bn_F$. On the faces $F$ lying on $\partial \O$ we take 
$\bn_F = \bn|_{F}$. We can introduce the global BDM-interpolation operator 
$\Pi_h: \H(\textrm{div}, \O)\cap \H^s(\O)^3 \to \bcW_h^c$,  
characterized, for any $\bv\in \H(\textrm{div}, \O)\cap \H^s(\O)^3$ with $s>1/2$, by the conditions
 \begin{align}\label{Pih}
\int_F \Pi_h\bv\cdot \bn_F \phi & =  \int_F \bv\cdot \bn_F \phi 
\quad&\text{for all $\phi\in \cP_k(F)$, for all $F\in \cF_h$},
 \\
\int_K \Pi_h\bv \cdot \bw &= \int_K \bv \cdot \bw \quad&\text{for all $\bw\in \bcN_{k-1}(K)$, for all $K\in \cT_h$}\label{DFGc}.
\end{align}
We have the following classical error estimate, see \cite{BoffiBrezziFortin},
\begin{equation}\label{asymp}
 \norm{\bv - \Pi_h \bv}_{0,\O} \leq C h^{\min(s, k+1)} \norm{\bv}_{s,\O} \qquad \forall \bv \in \HsO^{3}, \text{with $s>1/2$}.
\end{equation}
Moreover, thanks to the commutativity property, if $\div \bv \in \HsO$, then
\begin{equation}\label{asympDiv}
 \norm{\div (\bv - \Pi_h \bv) }_{0,\O} = \norm{\div \bv - \mathcal R_h \div \bv) }_{0,\O} 
 \leq C h^{\min(s, k)} \norm{\div\bv}_{s,\O},
\end{equation}
where $\mathcal R_h$ is the $\LO$-orthogonal projection onto $\cP_{k-1}(\cT_h)$. Finally, 
we denote by $\mathcal S_h:\ \bcQ\to\bcQ_h$ the orthogonal
projector with respect to the $\LO^{\nxn}$-norm. 
It is well-known that, for any $s\in(0,1]$, we have
\begin{equation}\label{asymQ}
 \norm{\br-\mathcal S_h\br}_{0,\O}
\leq C h^{\min(s, k)} \norm{\br}_{s,\O}
 \qquad\forall\br\in\HsO^{\nxn}\cap\bcQ.
\end{equation}

We propose the following continuous Galerkin (CG) discretization of problem \eqref{varForm}: 
find $\bsig_h\in \bcW^c_h$ and $\br_h\in \bcQ_h$ such that
\begin{subequations}\label{CGForm}
\begin{align}
\int_{\O} \bdiv \bsig_h \cdot \bdiv \btau - \kappa^2 \left( \int_{\O} \cC^{-1}\bsig_h:\btau + \int_{\O}\br_h:\btau\right)
= \int_{\O} \bF\cdot \bdiv \btau & \quad \forall \btau\in \bcW_h^c\label{CGForm-a}
\\
\int_{\O} \bsig_h:\bs = \0 & \quad \forall \bs \in \bcQ_h\label{CGForm-b}.
\end{align}
\end{subequations}

\begin{prop}\label{infsupA-disc}
 There exists a constant $\alpha_A^{c}>0$ independent of $h$ and $\lambda$  such that 
 \begin{equation}\label{infsupA-disceq}
  \sup_{\taush\in \bcW_h^c\times \bcQ_h} \frac{\A{\sigmarh, \taush}}{\norm{\taush}} 
  \geq \alpha_A^{c} \norm{\sigmarh}\quad \forall \sigmarh \in 
  \bcW_h^c\times \bcQ_h.
 \end{equation}
\end{prop}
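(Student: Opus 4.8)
The plan is to reproduce, at the discrete level, the argument behind Proposition~\ref{infsupA-cont}, paying attention to the fact that every constant involved must be independent of $h$ and of $\lambda$. First I would exhibit the saddle-point structure of the bilinear form by writing
\[
\A{\sigmarh, \taush} = a(\bsig_h,\btau_h) + \int_{\O}\br_h:\btau_h + \int_{\O}\bs_h:\bsig_h, \qquad a(\bsig_h,\btau_h) := \int_{\O}\cC^{-1}\bsig_h:\btau_h + \int_{\O}\bdiv\bsig_h\cdot\bdiv\btau_h .
\]
By the splitting \eqref{invcCop}, the form $a(\cdot,\cdot)$ is bounded on $\HdivO\times\HdivO$ by a constant that stays bounded as $\lambda\to\infty$, and the coupling term $(\btau_h,\bs_h)\mapsto\int_{\O}\bs_h:\btau_h$ is trivially bounded; so the verification of \eqref{infsupA-disceq} reduces, via the discrete Babu\v{s}ka--Brezzi theory, to a uniform coercivity estimate for $a$ together with a uniform (in $h$) discrete inf-sup estimate for the coupling term.

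The coercivity is inherited for free from the conforming inclusion $\bcW_h^c\subset\bcW$: estimate \eqref{elip0} applies verbatim to every $\btau_h\in\bcW_h^c$, giving $a(\btau_h,\btau_h)\geq\alpha_0\norm{\btau_h}^2_{\HdivO}$ with the same $\lambda$-independent constant $\alpha_0$. Note that this yields coercivity on the whole of $\bcW_h^c$, not merely on the discrete kernel of the coupling term, which simplifies the subsequent bookkeeping. The crux of the proof is therefore the discrete inf-sup condition for the symmetry-enforcing term: there is $\beta^c>0$, independent of $h$ (and of $\lambda$, which plays no role here), such that
\[
\sup_{\btau_h\in\bcW_h^c}\frac{\int_{\O}\bs_h:\btau_h}{\norm{\btau_h}_{\HdivO}} \geq \beta^c\,\norm{\bs_h}_{0,\O} \qquad\forall\bs_h\in\bcQ_h .
\]
This is the well-known stability property of the Arnold--Falk--Winther pair $\bcW_h^c\times\bcQ_h$, and I would either invoke it from \cite{afw-2007} (see also \cite{BoffiBrezziFortin}) or prove it with a Fortin operator: starting from the $\btau\in\bcW$ delivered by the continuous inf-sup used in Proposition~\ref{infsupA-cont} (so that $\int_{\O}\bs_h:\btau\geq\beta\norm{\bs_h}^2_{0,\O}$ and $\norm{\btau}_{\HdivO}\leq\norm{\bs_h}_{0,\O}$), one maps it into $\bcW_h^c$ by a $\bdiv$-stable operator that commutes with the divergence and preserves the $\LO^{\nxn}$-moments against $\bcQ_h$. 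Such an operator is obtained by composing a regularized version of the row-wise BDM interpolation $\Pi_h$ of Section~\ref{section:3} --- which already preserves interior moments against $\cP_{k-2}(K)^3\subset\bcN_{k-1}(K)$ --- with an element-wise correction by matrix-valued bubbles with vanishing normal trace on $\partial K$, chosen so that $\int_K(\cdot-\Pi_h(\cdot)):\bs_h=0$; the correction is well defined and stable because $\cP_k(K)^{\nxn}$ contains enough interior bubbles, and an inverse inequality keeps under control the divergence it introduces.

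Assembling these two ingredients, the discrete Babu\v{s}ka--Brezzi theorem applied to the symmetric saddle-point form $\A{\cdot,\cdot}$ on $\bcW_h^c\times\bcQ_h$ furnishes $\alpha_A^c>0$ depending only on $\alpha_0$, $\beta^c$ and the (uniform) boundedness constants, hence independent of $h$ and $\lambda$, for which \eqref{infsupA-disceq} holds. The only genuinely delicate point is the $h$-uniformity of the second inf-sup condition --- i.e.\ the existence of an $h$-uniformly bounded Fortin operator into $\bcW_h^c$ --- while the rest is a line-by-line transcription of the continuous argument; in particular, robustness with respect to $\lambda$ poses no additional difficulty since, exactly as in Proposition~\ref{infsupA-cont}, it is already guaranteed by \eqref{invcCop} and \eqref{elip0}.
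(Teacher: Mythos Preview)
Your proposal is correct and follows essentially the same route as the paper: identify the saddle-point structure, inherit coercivity of $a(\cdot,\cdot)$ on $\bcW_h^c$ directly from \eqref{elip0}, invoke the known discrete inf-sup condition for the Arnold--Falk--Winther pair (the paper cites \cite{afw-acta-2006, BoffiBrezziFortin}), and conclude via the Babu\v{s}ka--Brezzi theorem. The only difference is that you additionally sketch a Fortin-operator construction as an alternative to the citation, which the paper omits.
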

\begin{proof} 
We prove this result by following the same steps given in Proposition \ref{infsupA-cont}. 
We deduce from \eqref{elip0} that the bilinear form 
$
 \int_{\O} \bdiv \bsig \cdot \bdiv \btau + \int_{\O} \cC^{-1}\bsig:\btau
$
is elliptic on $\bcW_h^c$. 
Moreover, the following discrete inf-sup
condition is proved in \cite{afw-acta-2006, BoffiBrezziFortin}: There exists
$\beta^c>0$, independent of $h$, such that
\[
  \sup_{\btau_h\in \bcW_h^c} \frac{\disp\int_{\O}\bs_h:\btau_h}{\norm{\btau_h}_{\HdivO}} \geq \beta^c \norm{\bs_h}_{0,\O}.
 \]
Therefore, we can use the Babu\v{s}ka-Brezzi theory to 
ensure that, for any bounded linear form $L\in \mathcal{L}( \bcW\times \bcQ)$, the problem: find $\sigmarh\in \bcW_h^c\times \bcQ_h$
such that 
\[
 \A{\sigmarh, \taush} = L\big(\taush \big)\qquad \forall \taush \in \bcW_h^c\times \bcQ_h
\]
admits a unique solution and there exists a constant $C>0$ independent of $h$ and $\lambda$  such that 
\[
 \norm{\sigmarh} \leq C \norm{L}_{\mathcal{L}( \bcW\times \bcQ)},
\]
which gives \eqref{infsupA-disceq}.
\end{proof}

We can now consider the discrete counterpart $\bT_h:\, \bcW^c_h\times \bcQ_h \to \bcW^c_h\times \bcQ_h$ of $\bT$ 
characterized, for any $\sigmarh\in \bcW^c_h\times \bcQ_h$,  by 
\[
 \A{\bT_h\sigmarh, \taush} = \B{\sigmarh, \taush}\quad \forall \taush \in \bcW^c_h\times \bcQ_h.
\]
As a consequence of Proposition \ref{infsupA-disc}, $\bT_h$ is  well-defined and uniformly bounded with respect to 
$h$ and $\lambda$. Moreover, we  deduce  from \cite[Theorem 5.2]{MMR} that, 
if  $\frac{1}{1+\kappa^2}\notin\sp(\bT)$,  
there exists a mesh size $h_0>0$  such that, for $h\leq h_0$,  
\begin{equation}\label{boundTh}
 \norm{\big(\frac{\bI}{1+\kappa^2}   - \bT_h\big)\taush}  \geq C_0 \norm{\taush} \quad 
 \forall \taush\in \bcW^c_h\times \bcQ_h,
\end{equation}
with a constant  $C_0>0$ independent of $h$ and $\lambda$.

We introduce the bilinear form 
\[
   \D{\sigmarh, \taush}:=\A{\sigmarh, \taush} - (1+\kappa^2) \B{\sigmarh, \taush}
\]
and notice that there exists a constant $M^c_D>0$ independent of $h$ and $\lambda$  such that 
\begin{equation}\label{boundD}
 \Big| \D{\sigmar, \taus} \Big| \leq M^c_D  \,
 \norm{\sigmar}\,  \norm{\taus} \qquad \forall \sigmar, \, \taus \in  \bcW\times \bcQ.
\end{equation}

\begin{prop}\label{infsupD}
Assume that  $\frac{1}{1+\kappa^2}\notin\sp(\bT)$ and let $h_0>0$ be the parameter for which 
\eqref{boundTh} holds true for all $h\leq h_0$. Then, for  $h\leq h_0$, 
\begin{equation}\label{infsupABh0}
  \inf_{\taush\in \bcW_h^c\times \bcQ_h} \frac{\D{\sigmarh, \taush}}{\norm{\taush}} 
  \geq \alpha_D^c  \norm{\sigmarh},\qquad \forall \sigmarh \in 
  \bcW_h^c\times \bcQ_h,
\end{equation}
with $\alpha_D^c>0$ independent of the mesh size $h$ and $\lambda$. 
\end{prop}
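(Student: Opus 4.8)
The plan is to identify $\D{\cdot,\cdot}$ with the operator $\frac{\bI}{1+\kappa^2}-\bT_h$ up to a harmless scaling, and then transport the invertibility estimate \eqref{boundTh} into an inf-sup condition by using the discrete inf-sup stability of $\A{\cdot,\cdot}$ from Proposition~\ref{infsupA-disc}. First I would observe that, by the definition of $\bT_h$, for any $\sigmarh\in\bcW_h^c\times\bcQ_h$ and any $\taush\in\bcW_h^c\times\bcQ_h$,
\[
 \D{\sigmarh,\taush}=\A{\sigmarh,\taush}-(1+\kappa^2)\B{\sigmarh,\taush}
 =(1+\kappa^2)\,\A{\big(\tfrac{\bI}{1+\kappa^2}-\bT_h\big)\sigmarh,\ \taush}.
\]
Hence, taking the supremum over $\taush$ in the unit ball and invoking \eqref{infsupA-disceq},
\[
 \sup_{\taush\in\bcW_h^c\times\bcQ_h}\frac{\D{\sigmarh,\taush}}{\norm{\taush}}
 =(1+\kappa^2)\sup_{\taush}\frac{\A{\big(\tfrac{\bI}{1+\kappa^2}-\bT_h\big)\sigmarh,\taush}}{\norm{\taush}}
 \geq (1+\kappa^2)\,\alpha_A^c\,\Big\|\big(\tfrac{\bI}{1+\kappa^2}-\bT_h\big)\sigmarh\Big\|.
\]

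Next I would apply \eqref{boundTh}: for $h\leq h_0$ the right-hand side is bounded below by $(1+\kappa^2)\,\alpha_A^c\,C_0\,\norm{\sigmarh}$, so the claimed estimate \eqref{infsupABh0} holds with $\alpha_D^c:=(1+\kappa^2)\,\alpha_A^c\,C_0$, a constant independent of $h$ and $\lambda$ since both $\alpha_A^c$ (by Proposition~\ref{infsupA-disc}) and $C_0$ (by the cited result) are. One technical point to address is that the statement is phrased with an $\inf$ over $\taush$ rather than a $\sup$; I would note that since the bilinear form $\A{\cdot,\cdot}$ and hence $\D{\cdot,\cdot}$ is symmetric and the spaces are finite dimensional, the operator $\frac{\bI}{1+\kappa^2}-\bT_h$ is self-adjoint with respect to the inner product induced by $\A{\cdot,\cdot}$ on $\bcW_h^c\times\bcQ_h$ (which is an inner product by the ellipticity established in Proposition~\ref{infsupA-disc}); therefore the sup-based inf-sup condition just derived is equivalent to the inf-based one in \eqref{infsupABh0}, with the same constant. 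Equivalently, one checks that the square matrix representing $\D{\cdot,\cdot}$ is invertible with uniformly bounded inverse, so that the supremum and infimum inf-sup constants coincide.

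The main obstacle here is not a deep one: it is simply to correctly exploit \eqref{boundTh}, which is stated as a one-sided bound $\norm{(\tfrac{\bI}{1+\kappa^2}-\bT_h)\taush}\ge C_0\norm{\taush}$, together with self-adjointness to conclude that this operator has a bounded inverse (not merely injectivity, though on a finite-dimensional space the two coincide). Once that is in hand, the composition with the stability of $\A{\cdot,\cdot}$ is immediate. I would also remark, for the record, that the constant $(1+\kappa^2)$ is fixed and finite under the hypothesis $\frac{1}{1+\kappa^2}\notin\sp(\bT)$, so absorbing it into $\alpha_D^c$ is legitimate; no $\lambda$- or $h$-dependence is introduced at any step because every ingredient invoked is uniform in those parameters by construction.
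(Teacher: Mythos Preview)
Your proof is correct and follows essentially the same route as the paper. Both arguments hinge on the identity $\D{\sigmarh,\taush}=(1+\kappa^2)\A{(\tfrac{\bI}{1+\kappa^2}-\bT_h)\sigmarh,\taush}$ and then combine the discrete inf-sup for $\A{\cdot,\cdot}$ with \eqref{boundTh}; the paper realizes the inf-sup for $\A{\cdot,\cdot}$ through an explicit operator $\Theta_h$ (satisfying $\A{\taush,\Theta_h\taush}=\norm{\taush}^2$ and $\norm{\Theta_h\taush}\le(\alpha_A^c)^{-1}\norm{\taush}$) and plugs in the test pair $\Theta_h(\tfrac{\bI}{1+\kappa^2}-\bT_h)\sigmarh$, whereas you invoke \eqref{infsupA-disceq} directly---these are the same step written two ways, and your observation about the $\inf$/$\sup$ typo in \eqref{infsupABh0} is also apt.
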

\begin{proof}
 We deduce from Proposition \ref{infsupA-disc} that there exists an 
 operator $\Theta_h:\, \bcW^c_h\times \bcQ_h \to \bcW^c_h\times \bcQ_h$ satisfying
 \begin{equation}\label{cota1}
  \A{\taush, \Theta_h \taush} = \norm{\taush}^2 \quad \text{and}\quad
  \norm{\Theta_h \taush} \leq \frac{1}{\alpha_A^c} \norm{\taush}
 \end{equation}
 for all $\taush\in \bcW^c_h\times \bcQ_h$.
It follows from \eqref{boundTh} and \eqref{cota1} that 
\begin{multline*}
 \D{ \taush, \Theta_h \big(\frac{\bI}{1+\kappa^2} - \bT_h \big) \taush } \\= (1+\kappa^2) 
 \A{ \big(\frac{\bI}{1+\kappa^2} - \bT_h \big) \taush, \Theta_h \big(\frac{\bI}{1+\kappa^2} - \bT_h \big) \taush}\\
 = (1+\kappa^2)    \norm{\big(\frac{\bI}{1+\kappa^2} - \bT_h \big) \taush}^2 \ge (1+\kappa^2)C_0  \norm{\taush} 
 \, \norm{\big(\frac{\bI}{1+\kappa^2} - \bT_h \big) \taush}
\end{multline*}
for all $\sigmarh \in 
  \bcW_h^c\times \bcQ_h$, with the constant $C_0>0$ from \eqref{boundTh}. 
  The result follows now with $\alpha_D^c = C_0 (1+\kappa^2)$.
\end{proof}

\begin{theorem}\label{ConvCG}
Assume that  $\frac{1}{1+\kappa^2}\notin\sp(\bT)$ and let $h_0>0$ be the parameter for which 
\eqref{boundTh} holds true for all $h\leq h_0$. Then, for  $h\leq h_0$,    
we have the following C\'ea estimate,
\begin{equation}\label{Cea}
 \norm{\sigmar - \sigmarh} \leq \left( 1 + \frac{M^c_D}{\alpha_D^c} \right) \inf_{\taush\in \bcW_h^c\times \bcQ_h} 
 \norm{\sigmar - \taush}, \quad \forall h\leq h_0.
\end{equation}
Moreover, if the exact solution $\bu$ of \eqref{model} belongs to $\H^{1+s}(\O)^3$ and 
$\bdiv \bsig \in \H^{s}(\O)^3$ for some $s>1/2$ then,   
\[
 \norm{\sigmar - \sigmarh} \leq \, C\,  h^{\min(s, k)}\,  ( \norm{\bu}_{1+s,\O} + 
 \norm{\bdiv\bsig}_{s,\O}), \quad \forall h\leq h_0,
\]
with $C>0$ independent of $h$ and $\lambda$.
\end{theorem}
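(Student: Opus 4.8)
The plan is to prove the two assertions in sequence, since the second follows from the first by inserting a suitable comparison function and invoking the interpolation estimates already recorded.

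\textbf{Step 1: the C\'ea estimate.} First I would observe that, subtracting the discrete equations \eqref{CGForm} from the continuous ones \eqref{varForm} tested against $\taush\in\bcW_h^c\times\bcQ_h$, one obtains the Galerkin orthogonality relation $\D{\sigmar-\sigmarh,\taush}=0$ for all $\taush\in\bcW_h^c\times\bcQ_h$; here one uses that $\D{\cdot,\cdot}=\A{\cdot,\cdot}-(1+\kappa^2)\B{\cdot,\cdot}$ encodes precisely the left-hand side of \eqref{varForm-a}--\eqref{varForm-b}, namely $\int_\O\bdiv\bsig\cdot\bdiv\btau-\kappa^2(\int_\O\cC^{-1}\bsig:\btau+\int_\O\br:\btau)+\int_\O\bsig:\bs$. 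Then, for an arbitrary $\taush\in\bcW_h^c\times\bcQ_h$, write $\sigmar-\sigmarh=(\sigmar-\taush)+(\taush-\sigmarh)$; the second summand lies in the discrete space, so by the discrete inf-sup bound \eqref{infsupABh0} of Proposition \ref{infsupD} there is $\bz_h\in\bcW_h^c\times\bcQ_h$ with $\norm{\bz_h}=1$ and $\alpha_D^c\norm{\taush-\sigmarh}\le\D{\taush-\sigmarh,\bz_h}=\D{\taush-\sigmar,\bz_h}$, using Galerkin orthogonality. Applying the continuity bound \eqref{boundD} gives $\norm{\taush-\sigmarh}\le(M^c_D/\alpha_D^c)\norm{\sigmar-\taush}$, and the triangle inequality yields $\norm{\sigmar-\sigmarh}\le(1+M^c_D/\alpha_D^c)\norm{\sigmar-\taush}$. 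Taking the infimum over $\taush$ gives \eqref{Cea}.

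\textbf{Step 2: the rate.} Here I would take $\taush=(\Pi_h\bsig,\mathcal S_h\br)$ in \eqref{Cea}, where $\Pi_h$ is the BDM interpolant and $\mathcal S_h$ the $\L^2$-projection onto $\bcQ_h$. One must first check $\Pi_h\bsig\in\bcW_h^c$, i.e.\ that the interpolant preserves the boundary condition $\btau\bn=\0$ on $\S$; this holds because the face degrees of freedom \eqref{Pih} of $\Pi_h\bsig$ on faces contained in $\S$ vanish, since $\bsig\bn=\0$ there and $\bn_F=\bn|_F$. Then $\norm{\sigmar-(\Pi_h\bsig,\mathcal S_h\br)}^2=\norm{\bsig-\Pi_h\bsig}_{0,\O}^2+\norm{\bdiv(\bsig-\Pi_h\bsig)}_{0,\O}^2+\norm{\br-\mathcal S_h\br}_{0,\O}^2$, and the three terms are bounded respectively by \eqref{asymp}, \eqref{asympDiv} and \eqref{asymQ}. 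Since $\bsig=\cC\beps(\bu)$, we have $\norm{\bsig}_{s,\O}\le C\norm{\bu}_{1+s,\O}$, and $\br=\frac12(\nabla\bu-(\nabla\bu)^\t)$ gives $\norm{\br}_{s,\O}\le C\norm{\bu}_{1+s,\O}$; combined with the hypothesis $\bdiv\bsig\in\H^s(\O)^3$, all three interpolation bounds collapse to $Ch^{\min(s,k)}(\norm{\bu}_{1+s,\O}+\norm{\bdiv\bsig}_{s,\O})$ (noting $\min(s,k+1)\ge\min(s,k)$), which is the claimed estimate.

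\textbf{Main obstacle.} The analytically substantive ingredient, the discrete inf-sup stability \eqref{infsupABh0} at the indefinite wave number, is already in hand via Proposition \ref{infsupD}, which in turn rests on the uniform spectral convergence \eqref{boundTh} borrowed from \cite{MMR}. Consequently the remaining work is essentially bookkeeping; the only point demanding a little care is the verification that $\Pi_h$ maps into the constrained space $\bcW_h^c$ (the traction-free condition on $\S$) and the clean packaging of the regularity of $\bsig$ and $\br$ in terms of $\bu$, so that the right-hand side takes the stated form with a constant independent of $h$ and $\lambda$.
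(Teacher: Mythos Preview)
Your proposal is correct and follows exactly the paper's own route: the C\'ea estimate is obtained from Galerkin orthogonality for $D(\cdot,\cdot)$, the discrete inf-sup condition \eqref{infsupABh0}, and the continuity bound \eqref{boundD}; the rate then comes from inserting $(\Pi_h\bsig,\mathcal S_h\br)$ and invoking \eqref{asymp}, \eqref{asympDiv}, \eqref{asymQ}. One small slip: in your explicit expansion of $D(\cdot,\cdot)$ the term involving $\int_\O\bsig:\bs$ should carry a factor $-\kappa^2$, not $+1$ (since $D=A-(1+\kappa^2)B$ and $B$ contains $\int_\O\bs:\bsig$); this is harmless for the orthogonality argument because both \eqref{varForm-b} and \eqref{CGForm-b} make that term vanish anyway.
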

\begin{proof}
The C\'ea estimate \eqref{Cea} is a direct consequence of  
\eqref{boundD} and \eqref{infsupABh0}. The asymptotic error estimate follows from \eqref{asymp}, \eqref{asympDiv} and \eqref{asymQ}.
\end{proof}

\section{A discontinuous Galerkin discretization}\label{section:4}

From now on we assume that there exists $s_0>1/2$ such that 
$\bF|_{\O_j}\in \H^{s_0}(\O_j)$ for $j= 1,\cdots, J$, where 
$\set{\O_j,\quad j=1\cdots, J}$ is a set of polyhedral subdomains
forming a disjoint partition of $\bar \O$, i.e., 
\[
 \O_j\cap\O_i = \emptyset \quad \text{for all $1\leq i\neq j\leq J$ \quad and \quad $\bar\O = \cup_{j= 1}^J \bar{\O}_j$}.
\] 
We deduce from this additional hypothesis on $\bF$ and 
the momentum equation \eqref{model-b} that $(\bdiv \bsig)|_{\O_j}$ belongs to 
$\H^{\min(s_0, 1)}(\O_j)$ for any  $j= 1,\cdots, J$.

In what follows, we assume that $\mathcal{T}_h$ is compatible with the partition $\bar\O = \cup_{j= 1}^J \bar{\O}_j$, i.e., 
\[
 \disp\cup_{K\in \cT_h(\O_j)} K= \bar{\O}_j \quad \forall j=1,\cdots, J,
\]
where $\cT_h(\O_j):= \set{K\in\cT_h,\quad  K\subset \bar{\O}_j}$.

We say that a closed subset $F\subset \overline{\Omega}$ is an interior face if $F$ has a positive 2-dimensional 
measure and if there are distinct elements $K$ and $K'$ such that $F = K\cap K'$. A closed 
subset $F\subset \overline{\Omega}$ is a boundary face if
there exists $K\in \cT_h$ such that $F$ is a face of $K$ and $F = K\cap \Gamma$. 
We consider the set $\cF_h^0$ of interior faces and the set $\cF_h^\partial$ of boundary faces.
We assume that the boundary mesh $\cF_h^\partial$ is compatible with the partition $\DO = \G \cup \S$, i.e., 
\[
\cup_{F\in \cF_h^\G} F = \G \qquad \text{and} \qquad \cup_{F\in \cF_h^\S} F = \S
\]
where $\cF_h^\G:= \set{F\in \cF_h^\partial; \quad F\subset \G}$ and 
$\cF_h^{\S}:= \set{F\in \cF_h^\partial; \quad F\subset \S}$.
We denote   
\[
  \cF_h := \cF_h^0\cup \cF_h^\partial\qquad \text{and} \qquad \cF^*_h:= \cF_h^{0} \cup \cF_h^{\S},
\]
and for any element $K\in \cT_h$, we introduce the set 
 \[
 \cF(K):= \set{F\in \cF_h;\quad F\subset \partial K} 
 \]
 of faces composing the boundary of $K$.

For any $s\geq 0$, we consider the broken Sobolev space 
\[
 \H^s(\cT_h):=
 \set{\bv \in \L^2(\O)^3; \quad \bv|_K\in \H^s(\O)^3\quad \forall K\in \cT_h}.
\]
For each $\bv:=\set{\bv_K}\in \H^s(\cT_h)^3$ and 
$\btau:= \set{\btau_K}\in \H^s(\cT_h)^{\nxn}$
the components $\bv_K$ and $\btau_K$  represent the restrictions $\bv|_K$ and $\btau|_K$. 
When no confusion arises, the restrictions of these functions will be written 
without any subscript. We will also need the space given on the skeletons of the triangulations $\cT_h$  by 
\[
 \L^2(\cF_h):= \prod_{F\in \mathcal{F}_h} \L^2(F).
 \]
Similarly, the components $\mu_F$ 
of $\mu := \set{\mu_F}\in \L^2(\cF_h)$  
coincide with the restrictions $\mu|_F$ and we denote
\[
 \int_{\cF_h} \mu := \sum_{F\in \cF_h} \int_F \mu_F\quad \text{and}\quad 
 \norm{\mu}^2_{0,\cF_h}:= \int_{\cF_h} \mu^2, 
 \qquad 
 \forall \mu\in \L^2(\cF_h).
\]
From now on, $h_\cF\in \L^2(\cF_h)$ is the piecewise constant function 
defined by $h_\cF|_F := h_F$ for all $F \in \cF_h$ with $h_F$ denoting the 
diameter of face $F$.

Given a vector valued function $\bv\in \H^t(\cT_h)^3$, with $t>1/2$, 
we define averages $\mean{\bv}\in \L^2(\cF_h)^3$ and jumps $\jump{\bv}\in \L^2(\cF_h)$ 
by
\[
 \mean{\bv}_F := (\bv_K + \bv_{K'})/2 \quad \text{and} \quad \jump{\bv}_F := \bv_K \cdot\n_K + \bv_{K'}\cdot\n_{K'} 
 \quad \forall F \in \cF(K)\cap \cF(K'),
\]
where $\n_K$ is the outward unit normal vector to $\partial K$. On the boundary of $\O$ we use the following 
conventions for averages and jumps:
\[
 \mean{\bv}_F := \bv_K  \quad \text{and} \quad \jump{\bv}_F := \bv_K \cdot\n 
 \quad \forall F \in \cF(K)\cap \DO.
\]

Similarly, for matrix valued functions $\btau\in \H^t(\cT_h)^{\nxn}$, we define $\mean{\btau}\in \L^2(\cF_h)^{\nxn}$ and 
$\jump{\btau}\in \L^2(\cF_h)^3$ by
\[
 \mean{\btau}_F := (\btau_K + \btau_{K'})/2 \quad \text{and} \quad \jump{\btau}_F := 
 \btau_K \n_K + \btau_{K'}\n_{K'} 
 \quad \forall F \in \cF(K)\cap \cF(K')
\]
and on the boundary of $\Omega$ we set 
\[
 \mean{\btau}_F := \btau_K  \quad \text{and} \quad \jump{\btau}_F := 
 \btau_K \n  
 \quad \forall F \in \cF(K)\cap \DO.
\]

For any $k\geq 1$ we introduce the finite dimensional space 
$
 \bcW_h := \cP_k(\cT_h)^{\nxn}
$
and consider 
$
 \bcW(h) := \bcW + \bcW_h. 
$
Given $\btau \in \bcW_h$ we define $\bdiv_h \btau \in \L^2(\O)^3$ by 
$
\bdiv_h \btau|_{K} = \bdiv (\btau|_K)$ for all $K\in \cT_h
$ and 
endow $\bcW(h)$ with the seminorm
\[
 |\btau|^2_{\bcW(h)} := \norm{\bdiv_h \btau}^2_{0,\O} + \norm{h_{\cF}^{-1/2} \jump{\btau}}^2_{0,\cF^*_h}
\]
and the norm
\[
 \norm{\btau}^2_{\bcW(h)} := |\btau|^2_{\bcW(h)} + \norm{\btau}^2_{0,\O}.
\]
For the sake of simplicity, we will also use the notation 
\[
 \norm{(\btau, \bs)}^2_{DG} : =  \norm{\btau}^2_{\bcW(h)} + \norm{\bs}^2_{0,\O}.
\]

Given a parameter $\texttt{a}>0$, we introduce  the symmetric bilinear form 
\begin{multline*}
 \Dh{\sigmar,\taus}:= 
 \int_{\O} \bdiv_h \bsig \cdot \bdiv_h \btau - \kappa^2 \B{\sigmar, \taus} + \\
 \int_{\cF^*_h} \texttt{a}h_{\cF}^{-1}\, \jump{\bsig}\cdot \jump{\btau}-\int_{\cF^*_h} 
\left( \mean{\bdiv_h\bsig} \cdot \jump{\btau} 
+ \mean{\bdiv_h\btau} \cdot \jump{\bsig} \right)\quad \forall \sigmar, \taus \in \bcW_h\times \bcQ_h
\end{multline*}
and the linear form 
\[
 \Lh{\taus}:= \int_{\O} \bF\cdot \bdiv_h \btau- \int_{\cF^*_h} \mean{\bF}\cdot \jump{\btau}\qquad 
 \forall  \taus \in \bcW_h\times \bcQ_h,
\]
and consider the DG method: find $\sigmarh\in \bcW_h\times \bcQ_h$ such that 
\begin{equation}\label{DGshort}
\Dh{\sigmarh, \taus} = \Lh{\taus}\qquad \forall \taus\in \bcW_h\times \bcQ_h.
\end{equation}
We notice that, as it is usually the case for DG methods, the essential boundary condition is directly incorporated 
within the scheme. We need the following technical result to show that the bilinear form $D_h^{\texttt{a}}(\cdot, \cdot)$ is 
uniformly  bounded on $\bcW_h$. 
\begin{prop}\label{card} 
There exists a constant $C>0$ independent of $h$ such that 
 \begin{equation}\label{discTrace}
  \norm{h^{1/2}_{\cF}\mean{v}}_{0,\cF_h}\leq C \norm{v}_{0,\O}\quad \forall  v\in \cP_k(\cT_h). 
 \end{equation}
\end{prop}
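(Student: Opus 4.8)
The plan is to bound the face term $\sum_{F\in\cF_h}h_F\int_F\mean{v}^2$ element by element, reducing everything to a local discrete trace inequality on a single tetrahedron. First I would observe that for an interior face $F=K\cap K'$ one has $\mean{v}_F=(v_K+v_{K'})/2$, so $\mean{v}_F^2\le\tfrac12(v_K^2+v_{K'}^2)$ pointwise on $F$, while on a boundary face $F\subset\partial K$ simply $\mean{v}_F^2=v_K^2$. Summing over all faces and regrouping the contributions by element, each element $K$ contributes at most $\sum_{F\in\cF(K)}h_F\int_F v_K^2$, and since $h_F\le h_K$ for every face $F$ of $K$ (the face diameter never exceeds the element diameter), this is bounded by $h_K\sum_{F\in\cF(K)}\|v\|_{0,F}^2$. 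Because a tetrahedron has exactly four faces, the cardinality of $\cF(K)$ is fixed, so no shape-regularity-dependent combinatorics enter here.

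Next I would invoke the standard discrete (inverse) trace inequality on each element: for $v\in\cP_k(K)$,
\[
\|v\|_{0,\partial K}^2 \le C\, h_K^{-1}\,\|v\|_{0,K}^2,
\]
with $C$ depending only on $k$ and the shape-regularity constant of the mesh. This is classical and follows, for instance, by mapping to the reference tetrahedron $\hat K$ via the affine map $\bFF_K$, using equivalence of norms on the finite-dimensional space $\cP_k(\hat K)$, and tracking the Jacobian scalings $|K|\sim h_K^3$, $|F|\sim h_K^2$ under shape regularity. Combining this with the previous step gives, for each $K$,
\[
\sum_{F\in\cF(K)}h_F\int_F\mean{v}^2 \le h_K\cdot C\,h_K^{-1}\,\|v\|_{0,K}^2 = C\,\|v\|_{0,K}^2.
\]

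Finally I would sum over $K\in\cT_h$. The left-hand side, after the regrouping, dominates $\|h_\cF^{1/2}\mean{v}\|_{0,\cF_h}^2$ up to the constant factors introduced (the factor $1/2$ and the fact that each interior face is counted from both its neighbours, hence at most twice in the element-wise sum), and the right-hand side telescopes to $C\sum_K\|v\|_{0,K}^2 = C\|v\|_{0,\O}^2$. Taking square roots yields \eqref{discTrace}. The only mild subtlety — the "main obstacle" if any — is bookkeeping the constants correctly so that $C$ genuinely depends only on $k$ and the shape-regularity parameter and not on $h$; the discrete trace inequality itself is standard and I would cite it (e.g. from \cite{DiPietroErn}) rather than reprove it.
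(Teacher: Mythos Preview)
Your proposal is correct and follows essentially the same approach as the paper: regroup the face sum as an element sum using $\mean{v}_F^2\le\tfrac12(v_K^2+v_{K'}^2)$ and $h_F\le h_K$, then apply the discrete trace inequality $h_K\|v\|_{0,\partial K}^2\le C_0\|v\|_{0,K}^2$ cited from \cite{DiPietroErn}. The paper's version is terser (it writes the chain of inequalities in one line without spelling out the averaging bound), but the ingredients and structure are identical.
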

\begin{proof}
It is straightforward that
 \[
  \norm{h^{1/2}_{\cF}\mean{v}}^2_{0,\cF_h} = \sum_{F\in \cF_h} \norm{h_F^{1/2}\mean{v}}^2_{0,F} \leq   
   \sum_{K\in \cT_h} \sum_{F\in \cF(K)} h_F\norm{v}^2_{0,F} \leq  
  \sum_{K\in \cT_h}  h_K\norm{v}^2_{0,\partial K}.
\]
The result follows now from the following discrete trace inequality (cf. \cite{DiPietroErn}):
 \begin{equation*}\label{discreteTrace}
  h_K \norm{v}^2_{0,\partial K} \leq C_0 \norm{v}^2_{0,K} \quad \forall v\in \cP_k(K), \quad \forall K\in \cT_h,
 \end{equation*}
 where $C_0>0$ is independent of $K$. 
\end{proof}

With the aid of the Cauchy-Schwarz inequality and Proposition \ref{card}, we can easily prove that there exists 
constants $M^{d}_D>0$  independent of $h$ and $\lambda$ such that 
\begin{equation}
 \label{boundDh}
 |\Dh{\sigmar, \taush}| \leq M^{d}_D  \Big(\norm{\bsig}^2_{\bcW(h)} + 
 \norm{h_{\cF}^{1/2} \mean{\bdiv \bsig}}^2_{0,\cF^*_h} + \norm{\br}^2_{0,\O}\Big)^{1/2} \norm{(\btau_h, \bs_h)}_{DG}
\end{equation}
for all $\sigmar\in\bcW(h)\times \bcQ$ with $\bdiv_h\bsig \in \H^s(\cT_h)^3$ for a given $s>1/2$ and for all $\taush \in \bcW_h\times \bcQ_h$.
\medskip

We end this section by showing that the DG scheme \eqref{DGshort} is consistent. 

\begin{prop}\label{consistency}
 Let $\bu$ be the solution of \eqref{model} and let $\bsig:= \cC \beps(\bu)$ and $\br:= \frac12(\nabla \bu - (\nabla \bu)^\t)$. 
 Then,
 \begin{equation}\label{consistent}
  D_h^{\textup{\texttt{a}}}\Big( (\bsig-\bsig_h, \br - \br_h), \taush\Big) =  0 \quad \forall \taush\in \bcW_h\times \bcQ_h.
 \end{equation}
\end{prop}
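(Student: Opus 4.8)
The plan is to prove the consistency identity by showing that the exact solution $(\bsig,\br)$ satisfies the DG equation $\Dh{(\bsig,\br),\taush} = \Lh{\taush}$ for all $\taush\in\bcW_h\times\bcQ_h$, since the discrete solution $(\bsig_h,\br_h)$ satisfies the same equation by definition \eqref{DGshort}, and $\Dh{\cdot,\cdot}$ is bilinear. So everything reduces to a computation involving only the exact solution.

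First I would observe that under the standing regularity hypothesis, $\bsig = \cC\beps(\bu)$ is a genuine $\H(\bdiv,\O)$ function (indeed $\bsig\in\H^{s_0}$ elementwise with $\bdiv\bsig\in\L^2(\O)$ globally, by the momentum equation \eqref{model-b}), so $\jump{\bsig}_F = \0$ on every interior face $F\in\cF_h^0$, and the traction-free condition \eqref{model-c} gives $\bsig\bn = \0$ on $\S$, hence $\jump{\bsig}_F = \0$ on $\cF_h^\S$ as well; altogether $\jump{\bsig} = \0$ on $\cF_h^*$. Likewise $\bdiv_h\bsig = \bdiv\bsig$ globally belongs to $\L^2(\O)$, so $\mean{\bdiv_h\bsig}_F = (\bdiv\bsig)|_F$ is single-valued on interior faces and the penalty-type term $\int_{\cF^*_h}\texttt{a}h_\cF^{-1}\jump{\bsig}\cdot\jump{\btau}$ and the consistency term $\int_{\cF^*_h}\mean{\bdiv_h\btau}\cdot\jump{\bsig}$ both vanish.

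Next I would plug these simplifications into $\Dh{(\bsig,\br),\taush}$: what survives is
$\int_\O\bdiv\bsig\cdot\bdiv_h\btau - \kappa^2\B{(\bsig,\br),\taush} - \int_{\cF^*_h}\mean{\bdiv\bsig}\cdot\jump{\btau}$.
I would then integrate by parts element by element on $\btau_h$ in reverse — that is, rewrite $\int_\O\bdiv\bsig\cdot\bdiv_h\btau - \int_{\cF^*_h}\mean{\bdiv\bsig}\cdot\jump{\btau}$ in a form that matches $\Lh{\taush}$. Using the momentum equation $\bdiv\bsig = \bF - \kappa^2\bu$ (recall $\kappa^2 = \rho\omega^2$, so $\rho\omega^2\bu = \kappa^2\bu$), the first term becomes $\int_\O\bF\cdot\bdiv_h\btau - \kappa^2\int_\O\bu\cdot\bdiv_h\btau$ and the face term becomes $\int_{\cF^*_h}\mean{\bF}\cdot\jump{\btau} - \kappa^2\int_{\cF^*_h}\mean{\bu}\cdot\jump{\btau}$ (here $\bF$ is piecewise $\H^{s_0}$, $\bu\in\H^{1+s}$, both with well-defined traces; $\bu$ is continuous and $\bu=\0$ on $\G$, so $\mean{\bu}$ is single-valued on $\cF_h^0$ and the face integral involving $\bu$ may be taken over $\cF_h^*$ without changing anything — we need $\bu$ to have no jump across interior faces, which holds since $\bu\in\H^1(\O)$). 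The purely-$\bF$ terms reassemble into exactly $\Lh{\taush}$.

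It then remains to check that the $\kappa^2$-terms cancel, i.e. that
$\B{(\bsig,\br),\taush} = \int_\O\bu\cdot\bdiv_h\btau - \int_{\cF^*_h}\mean{\bu}\cdot\jump{\btau} + (\text{skew-symmetry contribution})$.
Here I would use the constitutive relation $\cC^{-1}\bsig = \nabla\bu - \br$ and integrate by parts on each element: $\int_K(\nabla\bu - \br):\btau_h = -\int_K\bu\cdot\bdiv\btau_h + \int_{\partial K}(\btau_h\bn_K)\cdot\bu$; wait, I need to be careful with the sign convention in $\B{\cdot,\cdot}$ and the direction of the identity — in fact $\int_\O\cC^{-1}\bsig:\btau_h = \int_\O(\nabla\bu):\btau_h - \int_\O\br:\btau_h$, and summing the elementwise integration by parts of $\int_K(\nabla\bu):\btau_h$ produces $-\int_\O\bu\cdot\bdiv_h\btau_h + \sum_K\int_{\partial K}\bu\cdot(\btau_h\bn_K)$; the face contributions collapse, using continuity of $\bu$ and $\bu=\0$ on $\G$, to $\int_{\cF^*_h}\mean{\bu}\cdot\jump{\btau_h}$ (with the sign and face-set matching because $\jump{\btau_h}$ already has the outward-normal sign built in and $\bu$ vanishes on $\G$ so only $\cF_h^* = \cF_h^0\cup\cF_h^\S$ survives). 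Finally $\int_\O\bsig:\bs_h = 0$ since $\bsig$ is symmetric and $\bs_h$ skew, so the $\bs_h$-part of $\B{\cdot,\cdot}$ drops out, and the $\br:\btau_h$ term is already matched by the corresponding term in $\B{(\bsig,\br),\taush}$. Collecting the pieces, the $\kappa^2$ contributions on both sides agree, which yields $\Dh{(\bsig,\br),\taush} = \Lh{\taush}$ and hence \eqref{consistent}.

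The main obstacle I anticipate is purely bookkeeping: getting the signs, the normal-vector conventions in the jump operators, and the precise face sets ($\cF_h$ vs. $\cF_h^*$, and which faces $\bu$ or $\bsig$ vanishes on) to line up so that the boundary terms from the elementwise integration by parts reassemble exactly into the $\mean{\cdot}\jump{\cdot}$ face integrals of $\Dh{\cdot,\cdot}$ and $\Lh{\cdot}$ — there is no deep difficulty, but it requires care because $\btau_h$ is genuinely discontinuous while $\bsig$, $\bu$, and $\bF$ have the regularity needed for single-valued traces on interior faces. The one genuinely substantive input (beyond IBP) is that the extra traction/regularity structure of the exact $\bsig$ kills the stabilization and one of the consistency terms; everything else is the standard DG consistency manipulation.
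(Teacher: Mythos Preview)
Your proposal is correct and follows essentially the same route as the paper: you exploit $\jump{\bsig}=\0$ on $\cF_h^*$ (from $\bsig\in\HdivO$ and $\bsig\bn=\0$ on $\S$) to kill the penalty and one consistency term, then use $\bdiv\bsig=\bF-\kappa^2\bu$, elementwise integration by parts, the identity $\cC^{-1}\bsig=\nabla\bu-\br$, the reassembly $\sum_K\int_{\partial K}\bu\cdot\btau_h\bn_K=\int_{\cF_h^*}\mean{\bu}\cdot\jump{\btau_h}$ (via continuity of $\bu$ and $\bu=\0$ on $\G$), and the symmetry of $\bsig$ to drop $\int_\O\bsig:\bs_h$. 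The only cosmetic difference is ordering: you substitute the momentum equation into both the volume \emph{and} face terms first and then verify the $\kappa^2$-cancellation, whereas the paper substitutes only in the volume term, integrates by parts, and recombines the resulting boundary term with the original face term via $\mean{\bdiv\bsig+\kappa^2\bu}=\mean{\bF}$; the computations are identical up to this regrouping.
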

\begin{proof}
 By definition, 
 \begin{multline}
 \label{id1}
  \Dh{\sigmar, \taush} = \int_{\O} \bdiv\bsig \cdot \bdiv_h \btau_h - 
  \kappa^2 \left( \int_{\O} \cC^{-1}\bsig:\btau_h + \int_{\O}\br:\btau_h\right)\\ -
\int_{\cF^*_h}  \mean{\bdiv\bsig} \cdot \jump{\btau_h}.
 \end{multline}
The identity $\bdiv \bsig = \bF - \kappa^2 \bu$ and  integration by parts  yield
\begin{multline*}
 \int_{\O} \bdiv\bsig \cdot \bdiv_h \btau_h = \int_{\O} \bF \cdot \bdiv_h \btau_h - 
 \kappa^2 \sum_{K\in \cT_h} \int_K \bu \cdot \bdiv \btau_h = \int_{\O} \bF \cdot \bdiv_h \btau_h +\\ 
 \kappa^2\sum_{K\in \cT_h} \int_K \nabla \bu: \btau_h - \kappa^2\sum_{K\in \cT_h} \int_{\partial K} \bu\cdot \btau_h\bn_K.
\end{multline*}
Substituting back  into \eqref{id1} by taking into account that $\nabla \bu=\cC^{-1}\bsig - \br$ and 
\[
 \sum_{K\in \cT_h} \int_{\partial K} \bu\cdot \btau_h\bn_K = \int_{\cF^*_h} \mean{\bu}\cdot \jump{\btau_h}
\]
we obtain 
\begin{multline*}
\Dh{\sigmar, \taush} = \int_{\O} \bF \cdot \bdiv_h \btau_h - 
\int_{\cF^*_h}  \mean{\bdiv\bsig +\kappa^2\bu} \cdot \jump{\btau_h} \\= 
\int_{\O} \bF \cdot \bdiv_h \btau_h - 
\int_{\cF^*_h}  \mean{\bF} \cdot \jump{\btau_h}
\end{multline*}
and the result follows.
\end{proof}

\section{Well-posedness and stability of the DG method}\label{section:5}
By using  the transformation rules  
\begin{equation}\label{trans}
 \phi\circ \bFF_K = \hat \phi, \qquad \bv\circ \bFF_K = \disp\frac{\bB_K \hat \bv}{|\det \bB_K|}
 \qquad \text{and} \qquad \bw\circ \bFF_K = \bB_K^{-\t} \check{\bw},
\end{equation}
we can easily show that  
\begin{equation}\label{ident}
 \int_F\bv\cdot \bn_K \phi = \int_{\hat F} \hat\bv\cdot \bn_{\hat K} \hat \phi 
 \qquad \text{and} \qquad 
 \int_K \bv \cdot \bw = \int_{\hat K} \hat \bv \cdot \check \bw,
\end{equation}
where $F$ is the image of the face $\hat F$ under the affine map $\bFF_K:\, \hat K \to \R^3$ defined in Section 
\ref{section:3}. 

\begin{prop}\label{equivA}
There exists a constant $C>0$ independent of $h$ such that  
\begin{multline}
\left( \norm{\mathrm{div}\, \bv}^2_{0,K} + h_K^{-2} \norm{\bv}^2_{0,K}\right)^{1/2} 
  \leq C\Big(  h_K^{-1} \sup_{\bw\in \bcN_{k-1}(K)} \frac{\int_K \bv \cdot \bw}{\norm{\bw}_{0,K}}\\+
  \sum_{F\in \cF(K)} h_F^{-1/2} \sup_{\phi\in \cP_k(F)} \frac{\int_F\bv\cdot \bn_K \phi}{\norm{\phi}_{0,F}}\Big) 
\end{multline} 
for all $\bv \in \cP_k(K)^3$.
\end{prop}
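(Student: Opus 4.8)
The plan is to work on the fixed reference element $\hat K$ and transport estimates back and forth using the transformation rules \eqref{trans} and the resulting identities \eqref{ident}. On $\hat K$ the space $\cP_k(\hat K)^3$ is finite dimensional, and the BDM degrees of freedom \eqref{DFa}--\eqref{DFb} are unisolvent, so the quantity
\[
 \check \bv \mapsto \sup_{\check\bw \in \bcN_{k-1}(\hat K)} \frac{\int_{\hat K} \check\bv \cdot \check\bw}{\norm{\check\bw}_{0,\hat K}} + \sum_{\hat F} \sup_{\hat\phi \in \cP_k(\hat F)} \frac{\int_{\hat F} \check\bv \cdot \bn_{\hat K}\hat\phi}{\norm{\hat\phi}_{0,\hat F}}
\]
is a norm on $\cP_k(\hat K)^3$ (it vanishes only when all degrees of freedom vanish, hence only when $\check\bv = \0$). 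Since $\norm{\mathrm{div}\,\check\bv}_{0,\hat K} + \norm{\check\bv}_{0,\hat K}$ is also a norm on that same finite-dimensional space, the two are equivalent with a constant depending only on $\hat K$ (and on $k$). This gives the claimed inequality on the reference element with no mesh dependence.

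Next I would scale. Writing $\bv\circ\bFF_K = \bB_K\check\bv/|\det\bB_K|$ as in \eqref{trans}, standard affine-equivalence estimates for shape-regular meshes give $\norm{\bv}_{0,K} \simeq |\det\bB_K|^{1/2}\norm{\bB_K}\,|\det\bB_K|^{-1}\norm{\check\bv}_{0,\hat K}$ up to constants involving $\norm{\bB_K}$, $\norm{\bB_K^{-1}}$, which are controlled by $h_K$ and the shape-regularity constant; similarly $\norm{\mathrm{div}\,\bv}_{0,K}$ scales one power of $h_K$ worse than $\norm{\bv}_{0,K}$ because of the extra derivative, and the Piola transform makes $\mathrm{div}\,\bv$ transform cleanly (the divergence on $K$ corresponds to $|\det\bB_K|^{-1}\,\mathrm{div}\,\check\bv$). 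On the right-hand side, the identities \eqref{ident} convert the functionals on $K$ and its faces exactly into the corresponding functionals on $\hat K$ and $\hat F$ (no Jacobian factors at all in $\int_F \bv\cdot\bn_K\phi$ and $\int_K\bv\cdot\bw$ with the chosen transformation of $\bw$), while the test-function norms $\norm{\phi}_{0,F}$ and $\norm{\bw}_{0,K}$ pick up the appropriate powers of $|\det\bB_K|$ and $\norm{\bB_K}$. Tracking these factors and using $h_F \simeq h_K$ for faces of $K$ (shape regularity), one recovers exactly the powers $h_K^{-2}$, $h_K^{-1}$, $h_F^{-1/2}$ displayed in the statement.

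The only genuinely delicate point is the bookkeeping of the scaling exponents: one must check that the extra power of $h_K$ in $\norm{\mathrm{div}\,\bv}_{0,K}$ relative to $h_K^{-1}\norm{\bv}_{0,K}$ matches, and that the surface term, which lives in codimension one, carries the half-power $h_F^{-1/2}$ rather than $h_F^{-1}$ — this comes from $\norm{\hat\phi}_{0,\hat F}\simeq h_F^{-1}|\det\bB_K|^{1/2}\norm{\phi}_{0,F}$ combined with the face-measure scaling $|\hat F|\simeq h_F^{-2}|F|$. All of these are routine consequences of affine equivalence, but they are precisely where the stated powers of $h$ come from, so that is the step to carry out carefully; the rest is just the reference-element norm-equivalence argument and shape regularity.
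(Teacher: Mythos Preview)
Your proposal is correct and follows exactly the same route as the paper: first invoke unisolvency of the BDM degrees of freedom on the reference element to get a norm equivalence there, then transport via the Piola and affine pullbacks \eqref{trans}--\eqref{ident} together with standard scaling to recover the stated powers of $h_K$ and $h_F$. One small slip: in your bookkeeping you write $\norm{\hat\phi}_{0,\hat F}\simeq h_F^{-1}|\det\bB_K|^{1/2}\norm{\phi}_{0,F}$, but since $\phi$ is pulled back by composition only, the correct relation is simply $\norm{\phi}_{0,F}^2\simeq h_F^{2}\norm{\hat\phi}_{0,\hat F}^2$ (no volume Jacobian enters for the scalar face test function); the paper records precisely this together with $h_K\norm{\bv}_{0,K}^2\simeq\norm{\hat\bv}_{0,\hat K}^2$, $h_K^3\norm{\mathrm{div}\,\bv}_{0,K}^2\simeq\norm{\mathrm{div}\,\hat\bv}_{0,\hat K}^2$, and $\norm{\bw}_{0,K}^2\simeq h_K\norm{\check\bw}_{0,\hat K}^2$.
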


\begin{proof}
We will use here the notation $a\lesssim b$ to express that there exists $C>0$ independent of $h$ 
such that $a\le C\,b$ for all $h$. The notation $A \simeq B$ means that $A\lesssim B$ and $B\lesssim A$ 
simultaneously. We first notice that, thanks to the unisolvency of conditions \eqref{DFa}-\eqref{DFb},  the norms 
\[
\left( \norm{\mathrm{div}\, \hat \bv}^2_{0,\hat K} +  \norm{\hat \bv}^2_{0, \hat K}\right)^{1/2} 
\quad 
\text{and} 
\quad
\sup_{\check \bw\in \bcN_{k-1}(\hat K)} \frac{\disp\int_{\hat K} \hat \bv \cdot \check \bw}{\norm{\check \bw}_{0,\hat K}} +
 \sum_{\hat F\in \cF(\hat K)}  \sup_{\hat \phi\in \cP_k(\hat F)} \frac{\disp\int_{\hat F}\hat \bv\cdot \bn_{\hat K} \hat \phi}
 {\norm{\hat \phi}_{0,\hat F}}
\]
are equivalent on the finite dimensional space $\cP_k(\hat K)^3$. Standard scaling arguments show that
\[
 h_K \norm{\bv}_{0,K}^2 \simeq  
 \norm{\hat \bv}^2_{0,\hat K},
 \qquad 
 h_K^{3} \norm{\text{div} \bv}_{0,K}^2 \simeq \norm{\mathrm{div}\, \hat \bv}^2_{0, \hat K}
\]
and
\[
 \norm{\phi}^2_{0,F} \simeq h_F^2 \norm{\hat \phi}^2_{0,\hat F}, \qquad 
 \norm{\bw}^2_{0,K} \simeq h_K \norm{\check \bw}^2_{0,\hat K}.
\] 
Hence, we deduce from \eqref{ident}  that 
\begin{multline*}
 \left( h_K \norm{\bv}^2_{0,K} + h_K^3 \norm{\text{div} \bv}_{0,K}^2   \right)^{1/2} \lesssim 
 \left( \norm{\hat \bv}^2_{0,\hat K} + \norm{\text{div} \hat \bv}^2_{0, \hat K} \right)^{1/2} \lesssim\\
 \sup_{\check \bw\in \bcN_{k-1}(\hat K)} \frac{\int_{\hat K} \hat \bv \cdot \check \bw}{\norm{\check \bw}_{0,\hat K}} +
 \sum_{\hat F\in \cF(\hat K)}  \sup_{\hat \phi\in \cP_k(\hat F)} \frac{\int_{\hat F}\hat \bv\cdot \bn_{\hat K} \hat \phi}
 {\norm{\hat \phi}_{0,\hat F}}  \lesssim\\
 h_K^{1/2} \sup_{\bw\in \bcN_{k-1}(K)} \frac{\int_K \bv \cdot \bw}{\norm{\bw}_{0,K}}+
  \sum_{F\in \cF(K)} h_F \sup_{\phi\in \cP_k(F)} \frac{\int_F\bv\cdot \bn_K \phi}{\norm{\phi}_{0,F}},
\end{multline*}
and the result follows.
\end{proof}

We introduce the projection $\cP_h:\, \bcW_h \to \bcW_h^c$ 
uniquely characterized, for any $\btau\in \bcW_h$, by the conditions
\begin{eqnarray}
 \int_F (\cP_h\btau) \bn_F \cdot \bphi & = & \int_F \btau \bn_F \cdot \bphi 
\qquad \forall \bphi\in \cP_k(F)^3, \quad\forall F\in \cF_h^{\G},\label{GDFb}
 \\
 \int_F (\cP_h\btau) \bn_F \cdot \bphi & = & 0 
\qquad \forall\bphi\in \cP_k(F)^3, \quad \forall F\in \cF_h^{\S},\label{GDFa}
 \\
 \int_F (\cP_h\btau) \bn_F \cdot \bphi & = & \int_F \mean{\btau}_F \bn_F \cdot \bphi
\qquad \forall \bphi\in \cP_k(F)^3, \quad\forall F\in \cF_h^{0},\label{GDFbc}
 \\
\int_K \cP_h\btau : \bW &= & \int_K \btau : \bW \quad\forall \text{$\bW$ with rows in $\bcN_{k-1}(K)$}, 
\quad \forall  K\in \cT_h\label{DFGd}.
\end{eqnarray}
We point out that the projection $\cP_h$ may be viewed as  the  div-conforming  
counterpart of the projection with curl-conforming range introduced in \cite{dominik1}.

\begin{prop}\label{propC}
The norm equivalence 
\begin{equation}\label{equivN}
\underbar{C}\,  \norm{\btau}_{\bcW(h)}  \leq \Big( \norm{\cP_h \btau}^2_{\HdivO} + 
 \norm{h_{\cF}^{-1/2} \jump{\btau}}^2_{0,\cF^*_h}
 \Big)^{1/2} \leq \bar{C} \norm{\btau}_{\bcW(h)} 
\end{equation}
holds true on $\bcW_h$ with constants $\underbar{C}>0$ and $\bar{C}>0$ independent of $h$.

\end{prop}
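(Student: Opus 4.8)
The plan is to establish the two inequalities in \eqref{equivN} separately, working element by element and exploiting the scaling estimate of Proposition \ref{equivA}. The common reduction is this: since $\norm{\btau}^2_{\bcW(h)} = \norm{\bdiv_h\btau}^2_{0,\O} + \norm{h_\cF^{-1/2}\jump{\btau}}^2_{0,\cF^*_h} + \norm{\btau}^2_{0,\O}$ and the middle term appears verbatim in the central expression of \eqref{equivN}, both bounds will follow once I control $\norm{\bdiv_h\btau}^2_{0,\O} + \norm{\btau}^2_{0,\O}$ against $\norm{\cP_h\btau}^2_{\HdivO} + \norm{h_\cF^{-1/2}\jump{\btau}}^2_{0,\cF^*_h}$ in both directions, uniformly in $h$. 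Since $\cP_h\btau\in\bcW_h^c\subset\HdivO$ is piecewise polynomial, $\norm{\cP_h\btau}^2_{\HdivO} = \norm{\bdiv_h(\cP_h\btau)}^2_{0,\O} + \norm{\cP_h\btau}^2_{0,\O}$ as well, so the whole matter is an $h$-uniform comparison of the two piecewise-polynomial tensors $\btau$ and $\cP_h\btau$.

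For the \emph{lower} bound (the left inequality), I would fix $K\in\cT_h$ and apply Proposition \ref{equivA} to $\bv = $ each row of $\btau$, giving
\[
 \norm{\bdiv\btau}^2_{0,K} + h_K^{-2}\norm{\btau}^2_{0,K} \lesssim
 h_K^{-2}\Big(\sup_{\bW}\frac{\int_K\btau:\bW}{\norm{\bW}_{0,K}}\Big)^2 + \sum_{F\in\cF(K)} h_F^{-1}\Big(\sup_{\bphi}\frac{\int_F\btau\bn_K\cdot\bphi}{\norm{\bphi}_{0,F}}\Big)^2,
\]
where the suprema run over rows in $\bcN_{k-1}(K)$, respectively over $\cP_k(F)^3$. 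By the defining conditions \eqref{GDFb}--\eqref{DFGd} of $\cP_h$, the volume term equals the same supremum for $\cP_h\btau$, and on each face $F$ the quantity $\int_F\btau\bn_K\cdot\bphi$ equals $\int_F(\cP_h\btau)\bn_K\cdot\bphi$ plus, on interior faces, a correction that is controlled by $\int_F(\btau - \mean{\btau})\bn_K\cdot\bphi = \pm\tfrac12\int_F\jump{\btau}\cdot\bphi$ up to orientation, and on faces in $\cF_h^\S$ by $\int_F(\btau-\0)\bn_K\cdot\bphi$, again $\pm\int_F\jump{\btau}\cdot\bphi$; on faces in $\cF_h^\G$ there is no correction at all. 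Hence each face supremum for $\btau$ is bounded by the corresponding one for $\cP_h\btau$ plus $\norm{\jump{\btau}}_{0,F}$ (the latter only for $F\in\cF^*_h$). Applying Proposition \ref{equivA} again to $\cP_h\btau$ (this time reading the inequality from the polynomial side) to absorb the $\cP_h\btau$-suprema into $\norm{\bdiv(\cP_h\btau)}_{0,K} + h_K^{-1}\norm{\cP_h\btau}_{0,K}$, multiplying through by $h_K^2\simeq h_F^2$ where needed, and summing over $K$ (using shape-regularity so that $h_F\simeq h_K$ and that each face is shared by at most two elements) yields $\norm{\btau}^2_{\bcW(h)} \lesssim \norm{\cP_h\btau}^2_{\HdivO} + \norm{h_\cF^{-1/2}\jump{\btau}}^2_{0,\cF^*_h}$, which is the left inequality.

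For the \emph{upper} bound (the right inequality), I run the same machinery with the roles of $\btau$ and $\cP_h\btau$ interchanged: Proposition \ref{equivA} applied to $\cP_h\btau$ controls $\norm{\bdiv(\cP_h\btau)}_{0,K}$ and $h_K^{-1}\norm{\cP_h\btau}_{0,K}$ by the volume supremum and the face suprema of $\cP_h\btau$; the volume supremum again coincides with that of $\btau$ by \eqref{DFGd}; each face supremum of $\cP_h\btau$ is, on $\cF_h^\G$, equal to that of $\btau$, on $\cF_h^0$ equal to the supremum with $\mean{\btau}$ in place of $\btau$, which is bounded by the one for $\btau_K$ on one side plus $\norm{\jump{\btau}}_{0,F}$, and on $\cF_h^\S$ it simply vanishes. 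Applying Proposition \ref{equivA} to $\btau$ on the other side to re-absorb its suprema, and summing over $K$ with shape-regularity, gives $\norm{\cP_h\btau}^2_{\HdivO} + \norm{h_\cF^{-1/2}\jump{\btau}}^2_{0,\cF^*_h} \lesssim \norm{\btau}^2_{\bcW(h)}$ after noting the jump term on the left is literally part of $\norm{\btau}^2_{\bcW(h)}$.

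The main obstacle I anticipate is the bookkeeping of the face corrections: one must carefully track, for each interior face $F = K\cap K'$ and each side, that $\btau\bn_K - (\cP_h\btau)\bn_K$ tested against $\bphi\in\cP_k(F)^3$ is exactly $\pm\tfrac12\int_F\jump{\btau}\cdot\bphi$ up to the chosen orientation of $\bn_F$, and likewise on $\cF_h^\S$ that it is $\pm\int_F\jump{\btau}\cdot\bphi$, so that only the jumps on $\cF^*_h$ enter and the Dirichlet faces $\cF_h^\G$ produce no term; together with the routine but essential check that shape-regularity makes all the local scalings $h_K\simeq h_F$ for $F\in\cF(K)$ mutually comparable, this is where the uniformity in $h$ is genuinely used. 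Everything else is a direct, if slightly lengthy, application of Proposition \ref{equivA} in both directions.
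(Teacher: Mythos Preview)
Your identification of the face corrections is correct and is exactly the ingredient the paper uses. However, the way you assemble the estimate has a scaling gap. When you apply Proposition~\ref{equivA} to $\btau$ and then re-absorb the $\cP_h\btau$-suprema via the reverse inequality, the right-hand side you obtain on each element is $\norm{\bdiv(\cP_h\btau)}^2_{0,K} + h_K^{-2}\norm{\cP_h\btau}^2_{0,K}$; after summing over $K$ this yields $\norm{\bdiv\cP_h\btau}^2_{0,\O} + \sum_{K} h_K^{-2}\norm{\cP_h\btau}^2_{0,K}$, not $\norm{\cP_h\btau}^2_{\HdivO}$. The weighted $L^2$ term is larger than the unweighted one by a factor of order $h^{-2}$, and no ``multiplying through by $h_K^2$ where needed'' repairs this without simultaneously destroying the $\bdiv$ control: both terms sit inside the same inequality and cannot be selectively rescaled. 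The lower bound in \eqref{equivN} therefore does not close along the route you describe.

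The paper sidesteps this by applying Proposition~\ref{equivA} directly to the \emph{difference} $\btau - \cP_h\btau$. By \eqref{DFGd} the volume moments of the difference vanish, so the estimate collapses to
\[
 \norm{\bdiv(\btau - \cP_h\btau)}^2_{0,K} + h_K^{-2}\norm{\btau - \cP_h\btau}^2_{0,K}
 \ \lesssim\ \sum_{F\in\cF(K)} h_F^{-1}\norm{\jump{\btau}}^2_{0,F},
\]
using precisely your face identifications. Summing over $K$ gives the single key bound $\norm{\btau - \cP_h\btau}^2_{\bcW(h)} \lesssim \norm{h_\cF^{-1/2}\jump{\btau}}^2_{0,\cF_h^*}$, and then a plain triangle inequality in the $\bcW(h)$-norm (together with $\norm{\cP_h\btau}_{\bcW(h)} = \norm{\cP_h\btau}_{\HdivO}$, since $\cP_h\btau$ is conforming) delivers both directions of \eqref{equivN} simultaneously, with no scaling mismatch.
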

\begin{proof}
 Using Proposition \ref{equivA} row-wise we deduce that there exists $C_0>0$ independent of $h$ such that 
 \begin{multline*}
 \norm{\bdiv (\btau- \cP_h \btau)}^2_{0,K} + h_K^{-2} \norm{\btau- \cP_h \btau}^2_{0,K} \\
  \leq C_0   
  \sum_{F\in \cF(K)} h_F^{-1} \Big( \sup_{\bphi\in \cP_k(F)^3} \frac{\int_F(\btau- \cP_h \btau)\bn_K\cdot \bphi}{\norm{\bphi}_{0,F}}\Big)^2. 
\end{multline*}
It is easy to obtain, from the definition of $\cP_h$, the identity
\[
 \int_F(\btau- \cP_h \btau)\bn_K\cdot \bphi = \begin{cases}
                                               \frac{1}{2}\int_F \jump{\btau}_F\cdot \bphi & \text{if $F\in \cF^0_h$}\\[1ex]
                                               \int_F \jump{\btau}_F\cdot \bphi & \text{if $F\in \cF^\S_h$}\\[1ex]
                                               0 & \text{if $F\in \cF^\G_h$}.
                                              \end{cases}
\]
Hence, using the Cauchy-Schwarz inequality and 
summing up over $K\in \cT_h$ we deduce that 
\begin{equation}\label{proj}
 \norm{\bdiv_h (\btau- \cP_h \btau)}^2_{0,\O} + \sum_{K\in \cT_h} h_K^{-2} \norm{\btau- \cP_h \btau}^2_{0,K}
 \leq C_0  
 \sum_{F\in \cF_h^*} h_F^{-1} \norm{\jump{\btau}}^2_{0,F},
\end{equation}
which proves that 
\begin{equation}\label{propB}
 \norm{\btau - \cP_h \btau}^2_{\bcW(h)} \leq (1 + C_0) \norm{h_{\cF}^{-1/2} \jump{\btau}}^2_{0,\cF^*_h}
 \qquad \forall \btau\in \bcW_h.
\end{equation}
The lower bound of \eqref{equivN} is then a consequence of the uniform boundedness of 
$\cP_h$ on $\bcW_h$,
\begin{equation}\label{PhBounded}
 \norm{\cP_h \btau}^2_{\HdivO} = \norm{\cP_h \btau}^2_{\bcW(h)} \leq 
 2\norm{\btau}^2_{\bcW(h)}  + 2 \norm{\btau - \cP_h \btau}^2_{\bcW(h)} \leq 2(2 + C_0) \norm{\btau}^2_{\bcW(h)}.
\end{equation}
On the other hand, 
\[
 \norm{\btau}^2_{\bcW(h)} \leq 2\norm{\cP_h\btau}^2_{\HdivO} + 2\norm{\btau - \cP_h\btau}^2_{\bcW(h)} \leq 
 2(1 + C_0)\left(\norm{\cP_h\btau}^2_{\HdivO} + \norm{h_{\cF}^{-1/2} \jump{\btau}}^2_{0,\cF^*_h}\right) 
\]
for all $\btau \in \bcW_h$ which gives the upper bound of \eqref{equivN}. 
\end{proof}

\begin{prop}\label{infsupDh}
Assume that $\frac{1}{1+\kappa^2}\notin \sp(\bT)$. There exist parameters $h^*>0$  and $\textup{\texttt{a}}^*>0$ 
such that, for $h\leq h^*$ and $\textup{\texttt{a}}\ge \textup{\texttt{a}}^*$, 
\begin{equation}\label{infsupABh}
  \sup_{\taush\in \bcW_h\times \bcQ_h} \frac{\Dh{\sigmarh, \taush}}{\norm{\taush}_{DG}} 
  \geq \alpha_D^{d}  \norm{\sigmarh}_{DG}, \forall \sigmarh \in \bcW_h\times \bcQ_h
\end{equation}
with  $\alpha_D^{d}>0$ independent of the mesh size $h$ and $\lambda$. 
\end{prop}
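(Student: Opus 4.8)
The strategy is to transfer the discrete inf--sup stability of the conforming method (Proposition~\ref{infsupD}) to the DG setting via the conforming reconstruction operator $\cP_h$, compensating for the non-conformity with the jump-penalization term. The key tool is the norm equivalence~\eqref{equivN}, which says that controlling $\norm{\cP_h\btau}_{\HdivO}$ together with the penalized jumps is the same as controlling $\norm{\btau}_{\bcW(h)}$; in particular, once $\cP_h\btau_h=\0$ and the jumps are small, $\btau_h$ itself is small.

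\textbf{Step 1: relate $\Dh{\cdot,\cdot}$ to $\D{\cdot,\cdot}$ on reconstructed fields.}
Given $\sigmarh=(\bsig_h,\br_h)\in\bcW_h\times\bcQ_h$, set $\bsig_h^c:=\cP_h\bsig_h\in\bcW_h^c$. Using the defining properties~\eqref{GDFb}--\eqref{DFGd} of $\cP_h$ (which match normal traces on interior and Neumann faces and the interior moments against $\bcN_{k-1}$), together with integration by parts, one shows that the consistency-type terms in $\Dh{}$ combine so that $\Dh{(\bsig_h,\br_h),(\btau_h^c,\bs_h)}$ agrees with $\D{(\bsig_h^c,\br_h),(\btau_h^c,\bs_h)}$ up to terms controlled by $\norm{\bsig_h-\bsig_h^c}_{\bcW(h)}$ and hence, by~\eqref{propB}, by $\norm{h_\cF^{-1/2}\jump{\bsig_h}}_{0,\cF^*_h}$. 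Since $\cP_h$ maps $\bcW_h$ onto $\bcW_h^c$, every conforming test pair $(\btau_h^c,\bs_h)$ is admissible in~\eqref{DGshort}.

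\textbf{Step 2: build the supremizing direction.}
Apply Proposition~\ref{infsupD} to $(\bsig_h^c,\br_h)\in\bcW_h^c\times\bcQ_h$ to obtain $(\btau_h^c,\bs_h)$ with $\D{(\bsig_h^c,\br_h),(\btau_h^c,\bs_h)}\ge\alpha_D^c\norm{(\bsig_h^c,\br_h)}\,\norm{(\btau_h^c,\bs_h)}$ and $\norm{(\btau_h^c,\bs_h)}\simeq\norm{(\bsig_h^c,\br_h)}$. To control the remaining jump term $\norm{h_\cF^{-1/2}\jump{\bsig_h}}_{0,\cF^*_h}^2$ we add to the test function the direction $\etab_h$ defined so that $\int_{\cF^*_h}\texttt{a}h_\cF^{-1}\jump{\bsig_h}\cdot\jump{\etab_h}$ reproduces (a multiple of) $\norm{h_\cF^{-1/2}\jump{\bsig_h}}^2_{0,\cF^*_h}$; the natural choice is $\etab_h:=\bsig_h$ itself, for which the penalty term equals $\texttt{a}\norm{h_\cF^{-1/2}\jump{\bsig_h}}^2_{0,\cF^*_h}$, while the other contributions $\int_\O\bdiv_h\bsig_h\cdot\bdiv_h\bsig_h-\kappa^2\B{\cdot}-\int_{\cF^*_h}2\mean{\bdiv_h\bsig_h}\cdot\jump{\bsig_h}$ are bounded below by a Young-inequality argument using~\eqref{discTrace}. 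Thus the test pair is $(\btau_h^c,\bs_h)+\delta(\bsig_h,\0)$ for a fixed small $\delta>0$.

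\textbf{Step 3: combine, absorb, and fix the parameters.}
Plugging this test pair into $\Dh{}$ and using Steps~1--2, one gets a lower bound of the form
\[
\Dh{\sigmarh,\taush}\ \ge\ c_1\,\norm{(\bsig_h^c,\br_h)}^2\ +\ (\texttt{a}\,\delta-c_2)\,\norm{h_\cF^{-1/2}\jump{\bsig_h}}^2_{0,\cF^*_h}\ -\ c_3\,\norm{h_\cF^{-1/2}\jump{\bsig_h}}\,\norm{(\bsig_h^c,\br_h)},
\]
with $c_1,c_2,c_3$ independent of $h$, $\lambda$, $\texttt{a}$. Choosing $\texttt{a}^*$ large enough (depending on $\delta$, $c_2$, $c_3$) makes the jump coefficient dominate, and a final Young inequality closes the estimate into $\Dh{\sigmarh,\taush}\ge c\,\big(\norm{(\bsig_h^c,\br_h)}^2+\norm{h_\cF^{-1/2}\jump{\bsig_h}}^2_{0,\cF^*_h}\big)$; by~\eqref{equivN} the right-hand side is $\gtrsim\norm{\sigmarh}_{DG}^2$. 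Since also $\norm{\taush}_{DG}\lesssim\norm{\sigmarh}_{DG}$, dividing yields~\eqref{infsupABh}. The parameter $h^*$ enters only through Proposition~\ref{infsupD} (i.e.\ $h^*=h_0$).

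\textbf{Main obstacle.}
The delicate point is Step~1: the algebraic identity showing that the face terms in $\Dh{}$, once $\bsig_h$ is replaced by $\cP_h\bsig_h$ on conforming test functions, collapse to the conforming form $\D{}$ plus a controllable remainder. This requires using each defining condition of $\cP_h$ in~\eqref{GDFb}--\eqref{DFGd} carefully — in particular that $\cP_h\btau_h\in\bcW$ has vanishing normal trace on $\S$ and single-valued normal trace across interior faces, so that $\jump{\cP_h\btau_h}=\0$ on $\cF^*_h$ — together with the broken integration-by-parts formula $\sum_K\int_{\partial K}\bv\cdot\btau_h\bn_K=\int_{\cF^*_h}\mean{\bv}\cdot\jump{\btau_h}$ already used in Proposition~\ref{consistency}. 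Getting the bookkeeping of the $\frac12$ factors on interior faces versus the full factor on Neumann faces to match~\eqref{equivN} is where care is needed; everything else is a standard inf--sup-plus-penalty argument.
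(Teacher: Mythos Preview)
Your strategy is sound and leads to a valid proof, but it diverges from the paper's argument in the choice of test direction. The paper also reconstructs $\btau_h^c:=\cP_h\btau_h$ and invokes the conforming supremizer $\Xi_h$ from Proposition~\ref{infsupD}, but instead of testing with $\Xi_h(\btau_h^c,\bs_h)+\delta(\bsig_h,\0)$ it tests with $\Xi_h(\btau_h^c,\bs_h)+(\tilde\btau_h,\0)$, where $\tilde\btau_h:=\btau_h-\cP_h\btau_h$ is the purely nonconforming remainder. This choice buys two simplifications. First, since $\norm{\tilde\btau_h}_{0,\O}$ is already controlled by the jumps via~\eqref{propB}, the negative zero-order terms $-\kappa^2\int_\O\cC^{-1}\tilde\btau_h:\tilde\btau_h$ in $\Dh{(\tilde\btau_h,\0),(\tilde\btau_h,\0)}$ are absorbed directly by $\texttt{a}\norm{h_\cF^{-1/2}\jump{\btau_h}}^2$, so no auxiliary parameter $\delta$ is needed. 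Second, by splitting the \emph{trial} argument as well, the conforming--conforming block $\Dh{(\btau_h^c,\bs_h),\Xi_h(\btau_h^c,\bs_h)}$ equals $\D{(\btau_h^c,\bs_h),\Xi_h(\btau_h^c,\bs_h)}=\norm{(\btau_h^c,\bs_h)}^2$ \emph{exactly}, and the three remaining cross terms are handled by the boundedness estimate~\eqref{boundDh} together with~\eqref{propB}. In particular, your ``main obstacle'' in Step~1---matching $\Dh{(\bsig_h,\br_h),(\btau_h^c,\bs_h)}$ to $\D{(\bsig_h^c,\br_h),(\btau_h^c,\bs_h)}$ face by face using the degrees of freedom~\eqref{GDFb}--\eqref{DFGd}---is entirely sidestepped in the paper; all that is needed is that $\jump{\btau_h^c}=\0$ on $\cF^*_h$ (immediate from $\btau_h^c\in\bcW$) and the bound~\eqref{propB}, not the individual moment conditions. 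Your route works but carries the extra bookkeeping of balancing $\delta$ against $\texttt{a}$ and absorbing $-\delta C\norm{\bsig_h}_{0,\O}^2$ back into $\norm{(\bsig_h^c,\br_h)}^2$ via~\eqref{propB}; the paper's choice of $\tilde\btau_h$ in place of $\delta\bsig_h$ eliminates that step.
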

\begin{proof}
We deduce from \eqref{infsupABh0} that there exists an operator 
$\Xi_h:\, \bcW_h^c\times \bcQ_h \to \bcW_h^c\times \bcQ_h$ 
such that, for $h\leq h_0$, 
\begin{equation}\label{Xi}
 \D{\taush, \Xi_h \taush} = \,  \norm{\taush}^2 \quad \text{and}\quad 
 \norm{\Xi_h \taush} \leq \frac{1}{\alpha_D^c} \norm{\taush}
\end{equation}
for all $\taush \in \bcW_h^c\times \bcQ_h$. 
Given $\taush\in \bcW_h\times \bcQ_h$, the decomposition $\btau_h = \btau_h^c + \tilde\btau_h$, with   
$\btau_h^c := \cP_h \btau_h$ and $\tilde\btau_h := \btau_h - \cP_h \btau_h$, and \eqref{Xi} yield   
\begin{multline}\label{split}
\Dh{\taush, \Xi_h \taushc+ \tausht} 
= \norm{\taushc}^2 +
\Dh{\taushc, \tausht} + \\
\Dh{ \tausht, \Xi_h \taushc} + 
\Dh{ \tausht, \tausht}.
\end{multline} 
Using the Cauchy-Schwarz inequality we have that  
\begin{multline*}
 \Dh{ \tausht, \tausht}= \norm{\bdiv_h \tilde \btau_h}_{0,\O}^2 + 
 \texttt{a} \norm{h_\cF^{-1/2} \jump{ \btau_h}}_{0,\cF^*_h}^2 - 
 \kappa^2 \int_{\O} \cC^{-1}\tilde \btau_h:\tilde \btau_h \\
 - 2 \int_{\cF^*_h} \mean{\bdiv_h \tilde \btau_h}\cdot \jump{\tilde \btau_h}
 \,\, \ge\,\, \texttt{a} \norm{h_\cF^{-1/2} \jump{ \btau_h}}_{0,\cF^*_h}^2 - C_1 \norm{ \tilde \btau_h }^2_{0,\O}\\
   -2 \norm{h_\cF^{1/2}  \mean{\bdiv_h \tilde \btau_h}}_{0,\cF^*_h} \norm{h_\cF^{-1/2} \jump{ \btau_h}}_{0,\cF^*_h}.
\end{multline*}
It follows from \eqref{discTrace} and \eqref{propB} that 
\[
 \Dh{ \tausht, \tausht} \geq (\texttt{a} -C_2)\norm{h_\cF^{-1/2} \jump{ \btau_h}}_{0,\cF^*_h}^2  
\]
with a constant $C_2>0$ independent of $h$ and $\lambda$. 

The remaining two terms of \eqref{split} are bounded from below by using \eqref{boundDh}, \eqref{discTrace} and \eqref{propB}. 
Indeed, it is straightforward that 
\begin{multline*}
 \Dh{\taushc, \tausht} \geq -M^{d}_D
  \norm{\taushc} \norm{\tilde\btau_h}_{\bcW(h)}\\
 \ge   -\frac{1}{4} \norm{\taushc}^2 - C_3  
 \norm{h_\cF^{-1/2} \jump{ \btau_h}}_{0,\cF^*_h}^2
\end{multline*}
and 
\begin{multline*}
 \Dh{ \tausht, \Xi_h \taushc} \geq - M^{d}_D 
 \norm{\Xi_h \taushc} \norm{\tilde\btau_h}_{\bcW(h)} \geq - \frac{M^{d}_D}{\alpha_D^{c}}   
 \norm{ \taushc} \norm{\tilde\btau_h}_{\bcW(h)} \\
 \geq -\frac{1}{4} \norm{  \taushc}^2 -  C_4 
 \norm{h_\cF^{-1/2} \jump{ \btau_h}}_{0,\cF_h}^2,
\end{multline*}
with $C_3>0$ and $C_4>0$ independent of $h$ and $\lambda$. Summing up, we have that,
\begin{equation*}
 \Dh{\taush, \Xi_h \taushc+ \tausht}\ge  \frac{1}{2} \norm{  \taushc}^2 + \big(\texttt{a}  - C^* \big) 
 \norm{h_\cF^{-1/2}\jump{ \btau_h}}_{0,\cF_h}^2,
\end{equation*}
with $C^* := C_2+C_3+C_4$. Hence, if $\texttt{a} > C^* + 1/2$, by virtue of \eqref{equivN} we have that 
\[
 \Dh{\taush, \Xi_h \taushc+ \tausht}\ge \frac{1}{2}  \Big( \norm{\taushc}^2 + 
 \norm{h_{\cF}^{-1/2} \jump{\btau}}^2_{0,\cF^*_h} \Big) \geq \frac{{\underbar C}^2}{2} \norm{ \taush }^2_{DG}.
\]
Finally, using \eqref{propB} and \eqref{Xi} we deduce that there exists $\alpha^d_D>0$ such that, 
\begin{equation*}
 \Dh{\taush, \Xi_h \taushc+ \tausht}
\geq \alpha^d_D 
\norm{\taush }_{DG} 
\Big(\norm{\Xi_h \taushc + \tausht}^2_{DG}\Big), 
\end{equation*}
provided that $h$ is sufficiently small and 
$\texttt{a}$ is sufficiently large,     
which gives \eqref{infsupABh}.
\end{proof}

The first consequence of the inf-sup condition \eqref{infsupABh} is that the DG problem \eqref{DGshort} 
admits a unique solution. Moreover,  we have the following C\'ea estimate.

\begin{theorem}
Assume that $\frac{1}{1+\kappa^2}\notin\sp(\bT)$ and let 
$\sigmar\in \bcW\times \bcQ$ be the solution of 
\eqref{varForm-a}--\eqref{varForm-b}. There exist parameters $h^*>0$  and $\textup{\texttt{a}}^*>0$ 
such that, for $h\leq h^*$ and $\textup{\texttt{a}}\ge \textup{\texttt{a}}^*$, 
 \begin{multline*}
 \norm{\sigmar - \sigmarh}_{DG} \leq (1+ \frac{M^{d}_D}{\alpha_D^{d}}) \inf_{\taush\in \bcW_h \times \bcQ_h} 
 \Big(
 \norm{\bsig - \btau_h}_{\bcW(h)} \\+ 
 \norm{h_{\cF}^{1/2} \mean{\bdiv (\bsig-\btau_h)}}_{0,\cF^*_h} + \norm{\br-\bs_h}_{0,\O}
 \Big).
\end{multline*}
Moreover, if the exact solution $\bu$ of \eqref{model} belongs to  $\H^{1+s}(\O)^3$   
for some $s>1/2$ and if $\bdiv \bsig \in \H^s(\O)^3$,  then the error estimate  
\[
 \norm{\sigmar - \sigmarh}_{DG} \leq \, C\,  h^{\min(s, k)}\,  \Big(
 \norm{\bu}_{1+s,K} + \norm{\bdiv \bsig}_{s,K}
 \Big), \quad \forall h\leq h^*,
\]
holds true with a constant  $C>0$ independent of $h$ and $\lambda$.
\end{theorem}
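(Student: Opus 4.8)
The plan is to follow the classical strategy for deriving a Céa estimate from an inf-sup condition together with consistency, and then to insert the interpolation bounds already collected in Section~\ref{section:3}. First I would fix $h\le h^*$ and $\textup{\texttt{a}}\ge\textup{\texttt{a}}^*$ so that Proposition~\ref{infsupDh} applies, pick an arbitrary $\taush=(\btau_h,\bs_h)\in\bcW_h\times\bcQ_h$, and write $\sigmarh-\taush = (\bsig_h-\btau_h,\br_h-\bs_h)\in\bcW_h\times\bcQ_h$. Applying the inf-sup condition \eqref{infsupABh} to this discrete element, there is a test function $\bz_h\in\bcW_h\times\bcQ_h$ with $\norm{\bz_h}_{DG}=1$ such that
\[
 \alpha_D^{d}\,\norm{\sigmarh-\taush}_{DG} \le \Dh{(\bsig_h-\btau_h,\br_h-\bs_h),\bz_h}.
\]
Now I would use consistency in the form of Proposition~\ref{consistency}: since $D_h^{\textup{\texttt{a}}}\big((\bsig-\bsig_h,\br-\br_h),\bz_h\big)=0$, we may replace $(\bsig_h,\br_h)$ by $(\bsig,\br)$ inside the bilinear form at the cost of the consistency error vanishing, giving
\[
 \Dh{(\bsig_h-\btau_h,\br_h-\bs_h),\bz_h} = \Dh{(\bsig-\btau_h,\br-\bs_h),\bz_h}.
\]
The right-hand side is then estimated by the boundedness bound \eqref{boundDh}, which yields
\[
 \alpha_D^{d}\,\norm{\sigmarh-\taush}_{DG} \le M^{d}_D\Big(\norm{\bsig-\btau_h}^2_{\bcW(h)} + \norm{h_{\cF}^{1/2}\mean{\bdiv(\bsig-\btau_h)}}^2_{0,\cF^*_h} + \norm{\br-\bs_h}^2_{0,\O}\Big)^{1/2}.
\]
For \eqref{boundDh} to be applicable I need $\bdiv_h(\bsig-\btau_h)\in\H^s(\cT_h)^3$ with $s>1/2$; this follows from the regularity hypothesis on $\bF$ made at the start of Section~\ref{section:4} together with the momentum equation, so that $\bdiv\bsig$ is piecewise $\H^{\min(s_0,1)}$, and $\btau_h$ is piecewise polynomial. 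Combining with the triangle inequality $\norm{\sigmar-\sigmarh}_{DG}\le\norm{\sigmar-\taush}_{DG}+\norm{\sigmarh-\taush}_{DG}$ and taking the infimum over $\taush$ gives the stated Céa estimate, with the understanding that $\norm{\sigmar-\taush}_{DG}$ is controlled by the three-term right-hand side (the face term $\norm{h_{\cF}^{1/2}\mean{\bdiv(\bsig-\btau_h)}}_{0,\cF^*_h}$ is not part of the $DG$-norm but is added because it appears in \eqref{boundDh}).

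For the asymptotic rate I would choose $\btau_h = \Pi_h\bsig$, the BDM interpolant of Section~\ref{section:3}, which is well-defined since $\bsig\in\HsdivO$ with the required regularity, and $\bs_h=\mathcal S_h\br$. Then $\norm{\br-\bs_h}_{0,\O}\le Ch^{\min(s,k)}\norm{\br}_{s,\O}$ by \eqref{asymQ}, and $\norm{\br}_{s,\O}\le C\norm{\bu}_{1+s,\O}$ since $\br$ is a first-order combination of derivatives of $\bu$. For the $\bcW(h)$-seminorm of $\bsig-\Pi_h\bsig$ I would split into the $\bdiv_h$ part, handled by \eqref{asympDiv} as $Ch^{\min(s,k)}\norm{\bdiv\bsig}_{s,\O}$, and the jump part $\norm{h_{\cF}^{-1/2}\jump{\bsig-\Pi_h\bsig}}_{0,\cF^*_h}$; since $\bsig\in\HdivO$ and $\Pi_h\bsig\in\bcW_h^c\subset\HdivO$ has matching normal traces on $\cF^*_h$, the jump of $\bsig-\Pi_h\bsig$ vanishes on interior faces and, by the boundary condition \eqref{model-c} and the definition of $\bcW_h^c$ (which enforces $\btau\bn=\0$ on $\S$ through $\cF_h^\S$), also on $\cF_h^\S$. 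Hence that term is zero. The $\L^2$-part of $\norm{\bsig-\Pi_h\bsig}_{\bcW(h)}$ is $Ch^{\min(s,k+1)}\norm{\bsig}_{s,\O}$ by \eqref{asymp}, which is of higher order. Finally, the extra face term $\norm{h_{\cF}^{1/2}\mean{\bdiv(\bsig-\Pi_h\bsig)}}_{0,\cF^*_h}$ is controlled by a scaled trace inequality: on each $K$, $h_K^{1/2}\norm{\bdiv\bsig-\mathcal R_h\bdiv\bsig}_{0,\partial K}\lesssim \norm{\bdiv\bsig-\mathcal R_h\bdiv\bsig}_{0,K} + h_K\abs{\bdiv\bsig-\mathcal R_h\bdiv\bsig}_{1,K}$ when $\bdiv\bsig\in\H^1(K)$, or more generally a fractional trace/approximation estimate for $\H^s$, $s>1/2$; either way this is bounded by $Ch^{\min(s,k)}\norm{\bdiv\bsig}_{s,\O}$, using \eqref{asympDiv} and standard approximation of $\mathcal R_h$. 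Collecting the three contributions gives the rate $O(h^{\min(s,k)})$ with a constant independent of $h$ and $\lambda$ (the $\lambda$-independence being inherited from $\alpha_D^{d}$, $M^{d}_D$, and the fact that $\Pi_h$, $\mathcal S_h$, $\mathcal R_h$ do not see $\lambda$).

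The main obstacle I anticipate is the bookkeeping around the auxiliary face term $\norm{h_{\cF}^{1/2}\mean{\bdiv(\bsig-\btau_h)}}_{0,\cF^*_h}$: it is genuinely needed because the boundedness estimate \eqref{boundDh} is not continuous with respect to the plain $DG$-norm on the continuous-side argument (the mean of the divergence on faces does not reduce to $\norm{\cdot}_{\bcW(h)}$), so one must carry it through the Céa argument and then, at the interpolation stage, prove it converges at the optimal rate via a trace inequality — this is the one place where the regularity assumption $s>1/2$ on $\bdiv\bsig$ is essential and where care is needed with fractional-order trace estimates rather than the cleaner $\H^1$ case. Everything else is a routine assembly of Propositions~\ref{consistency}, \ref{infsupDh}, the bound \eqref{boundDh}, and the approximation estimates \eqref{asymp}, \eqref{asympDiv}, \eqref{asymQ}.
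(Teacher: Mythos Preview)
Your proposal is correct and follows essentially the same route as the paper: the paper obtains the C\'ea estimate by invoking \cite[Theorem 1.35]{DiPietroErn}, which is precisely the abstract argument (inf-sup \eqref{infsupABh} + consistency \eqref{consistent} + boundedness \eqref{boundDh} + triangle inequality) that you spell out, and then specializes to $\btau_h=\Pi_h\bsig$, $\bs_h=\mathcal S_h\br$ and bounds the face term $\norm{h_{\cF}^{1/2}\mean{\bdiv(\bsig-\Pi_h\bsig)}}_{0,\cF^*_h}$ via the commuting diagram property and a scaled trace estimate, exactly as you propose. Your write-up is in fact more explicit than the paper's on two points it leaves implicit: that the jump contribution in $\norm{\bsig-\Pi_h\bsig}_{\bcW(h)}$ vanishes because both $\bsig$ and $\Pi_h\bsig$ lie in $\bcW$, and that the piecewise regularity of $\bF$ assumed at the start of Section~\ref{section:4} is what makes the face average $\mean{\bdiv\bsig}$ well-defined so that \eqref{boundDh} applies.
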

\begin{proof}
The first estimate follows from \eqref{boundDh}, \eqref{consistent} and \eqref{infsupABh}  as shown in 
\cite[Theorem 1.35]{DiPietroErn}. On the other hand, under the regularity hypotheses on $u$ and $\bsig$,
\begin{multline*}
 \norm{\sigmar - \sigmarh}_{DG} \leq (1+ \frac{M^{d}_D}{\alpha_D^{d}}) 
 \Big(
 \norm{\bsig - \Pi_h\bsig}_{\HdivO} + 
 \norm{h_{\cF}^{1/2} \mean{\bdiv (\bsig-\Pi_h\bsig)}}_{0,\cF^*_h}\\ + \norm{\br-\mathcal{S}_h\br}_{0,\O}
 \Big)
\end{multline*}
and we notice that 
\begin{equation*}
  \norm{h_{\cF}^{1/2} \mean{\bdiv (\bsig-\Pi_h\bsig)}}_{0,\cF^*_h}  
  \leq   
   \sum_{K\in \cT_h} \sum_{F\in \cF(K)} h_F\norm{ \bdiv (\bsig-\Pi_h\bsig) }^2_{0,F}.
\end{equation*}
Using the commuting diagram property satisfied by $\Pi_h$, the trace theorem and standard scaling arguments we 
obtain that 
\[
 h_F^{1/2}\norm{ \bdiv (\bsig-\Pi_h\bsig) }_{0,F} = h_F^{1/2}\norm{ \bdiv \bsig- \mathcal R_K \bdiv\bsig }_{0,F} 
 \leq C_2 h_K^{\min(k,s)} \norm{\bdiv \bsig}_{s,K}
\]
for all $F\in \cF(K)$, where the $\L^2(K)$-orthogonal projection $\mathcal R_K:= \mathcal R_h|_K$ 
onto $\cP_{k-1}(K)$ is applied componentwise. Consequently, by virtue of the error estimates 
\eqref{asymp}, \eqref{asympDiv} and \eqref{asymQ}, 
\[
 \norm{\sigmar - \sigmarh}_{DG} \leq C_3  h^{\min(k,s)}
 \Big(
 \norm{\bu}_{1+s,K} + \norm{\bdiv \bsig}_{s,K}
 \Big)
\]
and the result follows.
\end{proof}

%
%

\section{Numerical results}\label{section:6}
We present a series of numerical experiments confirming the good performance of the continuous Galerkin 
scheme \eqref{CGForm} and the discontinuous Galerkin scheme \eqref{DGshort}. 
For simplicity we consider our model problem in two dimensions. 
The corresponding theory and results from three dimensions apply with trivial modifications.

All the numerical results have been obtained by using the FEniCS 
Problem Solving Environment \cite{fenics}. 
We choose $\Omega=(0,1)\times (0,1)$, $\lambda=\mu=1$ and select the data $\bF$ so that the exact solution is given by 
\[
\bu(x_1, x_2) = \begin{pmatrix}
-x_2 \sin(\kappa \pi x_1)
\\ 0.5 \pi x_2 \cos(\kappa \pi x_1)                                        
\end{pmatrix}.
\]
We also assume that the body is fixed on the whole $\partial \O$ and the non-homogeneous 
Dirichlet boundary condition is imposed by adding an adequate boundary term to the right-hand side 
of \eqref{DGshort}. 
The numerical results obtained below for the continuous and 
discontinuous Galerkin schemes have been obtained by considering nested sequences of 
uniform triangular meshes $\cT_h$ of the unit square $\O$. 
The individual relative errors produced by the continuous 
Galerkin  method are given by 
\begin{equation}\label{Ec}
 \texttt{e}^\kappa_c(\bsig) := \frac{\| \bsig - \bsig_h \|_{\HdivO}}{\| \bsig \|_{\HdivO}} \qquad \text{and} \qquad 
\texttt{e}^\kappa_c(\br) := \frac{\| \br - \br_h \|_{0,\O}}{\| \br \|_{0,\O}},
\end{equation}
where $(\bsig, \br)\in \bcW\times \bcQ$ and $(\bsig_h, \br_h)\in \bcW_h^c\times \bcQ_h$ are the solutions 
of \eqref{varForm} and \eqref{CGForm} respectively. We introduce the experimental rates of convergence
\begin{equation}\label{EcO}
\texttt{r}_c^{\kappa}(\bsig) := \frac{\log (\texttt{e}_c^{\kappa}(\bsig)/ 
\hat{\texttt{e}}_c^{\kappa}(\bsig))}{\log (h/\hat{h})},\qquad 
\texttt{r}_c^{\kappa}(\br) := 
\frac{\log (\texttt{e}_c^{\kappa}(\br)/ \hat{\texttt{e}}_c^{\kappa}(\br))}{\log (h/\hat{h})},
\end{equation}
where $\texttt{e}_c^{\kappa}$ and $\hat{\texttt{e}}_c^{\kappa}$ are the errors corresponding to two
consecutive triangulations with mesh sizes $h$ and $\hat{h}$, respectively.

Similarly, we denote the individual relative errors of the 
discontinuous Galerkin scheme 
\begin{equation}\label{Ed}
 \texttt{e}^\kappa_{d}(\bsig) := \frac{\| \bsig - \bsig_h \|_{\bcW(h)}}{\| \bsig \|_{\HdivO}}, \qquad 
\texttt{e}^\kappa_{d}(\br) := \frac{\| \br - \br_h \|_{0,\O}}{\| \br \|_{0,\O}},
\end{equation}
where, in this case, $(\bsig_h,\br_h)\in \bcW_h\times \bcQ_h$ is the solution of \eqref{DGshort}.
Accordingly, the experimental rates of convergence of the DG scheme are given by 
\begin{equation}\label{EdO}
\texttt{r}_d^{\kappa}(\bsig) := \frac{\log (\texttt{e}_d^{\kappa}(\bsig)/ 
\hat{\texttt{e}}_d^{\kappa}(\bsig))}{\log (h/\hat{h})}\,,\qquad 
\texttt{r}_d^{\kappa}(\br) := 
\frac{\log (\texttt{e}_d^{\kappa}(\br)/ \hat{\texttt{e}}_d^{\kappa}(\br))}{\log (h/\hat{h})}.
\end{equation}

We begin by testing the convergence order of the continuous Galerkin method \eqref{CGForm} for the range of 
values $\kappa = 4, 8, 16, 32$ of the wave number. We 
report in Tables \ref{table:sCG2}, \ref{table:rCG2}, \ref{table:sCG3}, \ref{table:rCG3} 
the relative errors \eqref{Ec} and the convergence orders \eqref{EcO} obtained 
in the cases $k=2$ and $k= 4$, respectively. 
It is clear that the correct quadratic and quartic convergence rates of 
the errors are attained in each variable and for each fixed wave number $\kappa$. 

\begin{table}[!p]
\footnotesize
\singlespacing
\tabcolsep=0.12cm
\begin{center}
\begin{tabular}{c | cc | cc | cc | cc }
 $1/h$  & $\verb"e"_c^4(\boldsymbol{\sigma})$ & $\texttt{r}_c^4(\boldsymbol{\sigma})$ 
 & $\verb"e"_c^8(\boldsymbol{\sigma})$ & $\texttt{r}_c^8(\boldsymbol{\sigma})$ & $\verb"e"_c^{16}(\boldsymbol{\sigma})$ 
 & $\texttt{r}_c^{16}(\boldsymbol{\sigma})$ & $\verb"e"_c^{32}(\boldsymbol{\sigma})$ 
 & $\texttt{r}_c^{32}(\boldsymbol{\sigma})$ \cr
\hline

$8$ & 6.90e$-$02 & $-$ & 2.96e$-$01 & $-$ & 1.64e$+$00 & $-$ & 2.43e$+$00 & $-$ \cr

$16$ & 1.79e$-$02 & 1.94 & 6.80e$-$02 & 2.12 & 3.09e$-$01 & 2.41 & 1.11e$+$00 & 1.12 \cr

$32$ & 4.53e$-$03 & 1.99 & 1.77e$-$02 & 1.94 & 6.78e$-$02 & 2.19 & 3.01e$-$01 & 1.89 \cr

$64$ & 1.13e$-$03 & 2.00 & 4.46e$-$03 & 1.99 & 1.76e$-$02 & 1.94 & 6.77e$-$02 & 2.16 \cr

$128$ & 2.84e$-$04 & 2.00 & 1.12e$-$03 & 2.00 & 4.44e$-$03 & 1.99 & 1.76e$-$02 & 1.94 \cr

$256$ & 7.10e$-$05 & 2.00 & 2.80e$-$04 & 2.00 & 1.11e$-$03 & 2.00 & 4.44e$-$03 & 1.99 \cr

\end{tabular}
\caption{Convergence of the CG method in $\bsig$ for different wave numbers ($k=2$). }
\label{table:sCG2}
\end{center}
\end{table}

\begin{table}[!p]
\footnotesize
\singlespacing
\tabcolsep=0.12cm
\begin{center}
\begin{tabular}{c | cc | cc | cc | cc  }
 $1/h$  &  $\verb"e"_c^4(\br)$ & $\texttt{r}_c^4(\br)$ & $\verb"e"_c^8(\br)$ 
 & $\texttt{r}_c^8(\br)$ & $\verb"e"_c^{16}(\br)$ & $\texttt{r}_c^{16}(\br)$ 
 & $\verb"e"_c^{32}(\br)$ & $\texttt{r}_c^{32}(\br)$ \cr
\hline

$8$ & 1.96e$-$01 & $-$ & 7.64e$-$01 & $-$ & 9.97e$+$00 & $-$ & 1.59e$+$01 & $-$ \cr

$16$ & 5.32e$-$02 & 1.88 & 1.95e$-$01 & 1.98 & 9.32e$-$01 & 3.42 & 6.80e$+$00 & 1.23 \cr

$32$ & 1.36e$-$02 & 1.97 & 5.28e$-$02 & 1.88 & 1.95e$-$01 & 2.26 & 7.80e$-$01 & 3.12 \cr

$64$ & 3.43e$-$03 & 1.99 & 1.35e$-$02 & 1.97 & 5.27e$-$02 & 1.89 & 1.95e$-$01 & 2.00 \cr

$128$ & 8.60e$-$04 & 2.00 & 3.39e$-$03 & 1.99 & 1.34e$-$02 & 1.97 & 5.27e$-$02 & 1.89 \cr

$256$ & 2.15e$-$04 & 2.00 & 8.48e$-$04 & 2.00 & 3.38e$-$03 & 1.99 & 1.34e$-$02 & 1.97 \cr

\end{tabular}
\caption{Convergence of the CG method in  $\br$ for different wave numbers ($k=2$).}
\label{table:rCG2}
\end{center}
\end{table}

\begin{table}[!p]
\footnotesize
\singlespacing
\tabcolsep=0.12cm
\begin{center}
\begin{tabular}{c | cc | cc | cc | cc }
 $1/h$  &  $\verb"e"_c^4(\boldsymbol{\sigma})$ & $\texttt{r}_c^4(\boldsymbol{\sigma})$ 
 & $\verb"e"_c^8(\boldsymbol{\sigma})$ & $\texttt{r}_c^8(\boldsymbol{\sigma})$ & $\verb"e"_c^{16}(\boldsymbol{\sigma})$ 
 & $\texttt{r}_c^{16}(\boldsymbol{\sigma})$ & $\verb"e"_c^{32}(\boldsymbol{\sigma})$ 
 & $\texttt{r}_c^{32}(\boldsymbol{\sigma})$ \cr
\hline

$8$ & 9.33e$-$04 & $-$ & 1.67e$-$02 & $-$ & 2.34e$-$01 & $-$ & 1.50e$+$00 & $-$ \cr

$16$ & 5.95e$-$05 & 3.97 & 8.92e$-$04 & 4.23 & 1.70e$-$02 & 3.79 & 1.16e$-$01 & 3.69 \cr

$32$ & 3.74e$-$06 & 3.99 & 5.68e$-$05 & 3.97 & 8.81e$-$04 & 4.27 & 1.69e$-$02 & 2.78 \cr

$64$ & 2.34e$-$07 & 4.00 & 3.57e$-$06 & 3.99 & 5.61e$-$05 & 3.97 & 8.78e$-$04 & 4.27 \cr

$128$ & 1.46e$-$08 & 4.00 & 2.23e$-$07 & 4.00 & 3.53e$-$06 & 3.99 & 5.60e$-$05 & 3.97 \cr

\end{tabular}
\caption{Convergence of the CG method in  $\bsig$ for different wave numbers ($k=4$).}
\label{table:sCG3}
\end{center}
\end{table}

\begin{table}[!p]
\footnotesize
\singlespacing
\tabcolsep=0.12cm
\begin{center}
\begin{tabular}{c | cc | cc | cc | cc }
 $1/h$   & $\verb"e"_c^4(\br)$ & $\texttt{r}_c^4(\br)$ & $\verb"e"_c^8(\br)$ 
 & $\texttt{r}_c^8(\br)$ & $\verb"e"_c^{16}(\br)$ & $\texttt{r}_c^{16}(\br)$ 
 & $\verb"e"_c^{32}(\br)$ & $\texttt{r}_c^{32}(\br)$ \cr
\hline

$8$ & 1.88e$-$03 & $-$ & 3.31e$-$02 & $-$ & 1.52e$+$00 & $-$ & 8.98e$+$00 & $-$ \cr

$16$ & 1.23e$-$04 & 3.93 & 1.77e$-$03 & 4.23 & 3.64e$-$02 & 5.38 & 5.24e$-$01 & 4.10 \cr

$32$ & 7.80e$-$06 & 3.98 & 1.15e$-$04 & 3.94 & 1.75e$-$03 & 4.38 & 3.41e$-$02 & 3.94 \cr

$64$ & 4.90e$-$07 & 3.99 & 7.27e$-$06 & 3.98 & 1.13e$-$04 & 3.95 & 1.74e$-$03 & 4.29 \cr

$128$ & 3.27e$-$08 & 3.91 & 4.56e$-$07 & 4.00 & 7.15e$-$06 & 3.99 & 1.13e$-$04 & 3.95 \cr

\end{tabular}
\caption{Convergence of the CG method in  $\br$ for different wave numbers ($k=4$).}
\label{table:rCG3}
\end{center}
\end{table}

The subsequent numerical tests are for the discontinuous Galerkin scheme \eqref{DGshort}.
We present throughout Tables \ref{table:sDG4}, \ref{table:rDG4},  \ref{table:sDG6} and \ref{table:rDG6}
results corresponding to  $k=4$ with a range of 
wave numbers given by $\kappa = 4, 8, 16, 32$. We also show results corresponding to  $k=6$ 
with $\kappa = 16, 28, 32, 40$. For both polynomial degrees ($k=4, 6$) we take a stabilization parameter 
$\texttt{a}=100$. The expected rates of convergence are attained in all the cases. We notice that 
the higher  the value of the wave number $\kappa$ is, the smaller is the mesh size  needed to 
reduce the error below a given tolerance.

\begin{table}[!t]
\footnotesize
\singlespacing
\tabcolsep=0.12cm
\begin{center}
\begin{tabular}{c | cc | cc | cc | cc }
 $1/h$  &  $\verb"e"_d^4(\boldsymbol{\sigma})$ & $\texttt{r}_d^4(\boldsymbol{\sigma})$ 
 & $\verb"e"_d^8(\boldsymbol{\sigma})$ & $\texttt{r}_d^8(\boldsymbol{\sigma})$ 
 & $\verb"e"_d^{16}(\boldsymbol{\sigma})$ & $\texttt{r}_d^{16}(\boldsymbol{\sigma})$ 
 & $\verb"e"_d^{32}(\boldsymbol{\sigma})$ & $\texttt{r}_d^{32}(\boldsymbol{\sigma})$ \cr
\hline

$8$ & 9.41e$-$04 & $-$ & 1.68e$-$02 & $-$ & 2.35e$-$01 & $-$ & 1.49e$+$00 & $-$ \cr

$16$ & 6.01e$-$05 & 3.97 & 9.00e$-$04 & 4.22 & 1.70e$-$02 & 3.79 & 1.17e$-$01 & 3.68 \cr

$32$ & 3.78e$-$06 & 3.99 & 5.75e$-$05 & 3.97 & 8.89e$-$04 & 4.26 & 1.70e$-$02 & 2.78 \cr

$64$ & 2.36e$-$07 & 4.00 & 3.61e$-$06 & 3.99 & 5.68e$-$05 & 3.97 & 8.87e$-$04 & 4.26 \cr

$128$ & 1.48e$-$08 & 4.00 & 2.26e$-$07 & 4.00 & 3.57e$-$06 & 3.99 & 5.67e$-$05 & 3.97 \cr

\end{tabular}
\caption{Convergence of the DG method in  $\bsig$ for different wave numbers ($k=4$, $\texttt{a}=100$).}
\label{table:sDG4}
\end{center}
\end{table}

\begin{table}[!t]
\footnotesize
\singlespacing
\tabcolsep=0.12cm
\begin{center}
\begin{tabular}{c | cc | cc | cc | cc }
 $1/h$  &  $\verb"e"_d^4(\br)$ & $\texttt{r}_d^4(\br)$ & $\verb"e"_d^8(\br)$ 
 & $\texttt{r}_d^8(\br)$ & $\verb"e"_d^{16}(\br)$ & $\texttt{r}_d^{16}(\br)$ 
 & $\verb"e"_d^{32}(\br)$ & $\texttt{r}_d^{32}(\br)$ \cr
\hline

$8$ & 1.88e$-$03 & $-$ & 3.30e$-$02 & $-$ & 1.52e$+$00 & $-$ & 8.84e$+$00 & $-$ \cr

$16$ & 1.23e$-$04 & 3.93 & 1.77e$-$03 & 4.22 & 3.63e$-$02 & 5.38 & 5.22e$-$01 & 4.08 \cr

$32$ & 7.81e$-$06 & 3.98 & 1.15e$-$04 & 3.94 & 1.75e$-$03 & 4.38 & 3.40e$-$02 & 3.94 \cr

$64$ & 4.90e$-$07 & 3.99 & 7.27e$-$06 & 3.98 & 1.13e$-$04 & 3.95 & 1.74e$-$03 & 4.28 \cr

$128$ & 3.14e$-$08 & 3.97 & 4.56e$-$07 & 4.00 & 7.15e$-$06 & 3.99 & 1.13e$-$04 & 3.95 \cr

\end{tabular}
\caption{Convergence of the DG method in  $\br$ for different wave numbers ($k=4$, $\texttt{a}=100$).}
\label{table:rDG4}
\end{center}
\end{table}

\begin{table}[!t]
\footnotesize
\singlespacing
\tabcolsep=0.12cm
\begin{center}
\begin{tabular}{c | cc | cc | cc | cc  }
 $1/h$  & $\verb"e"_d^{16}(\boldsymbol{\sigma})$ & $\texttt{r}_d^{16}(\boldsymbol{\sigma})$ 
 & $\verb"e"_d^{28}(\boldsymbol{\sigma})$ & $\texttt{r}_d^{28}(\boldsymbol{\sigma})$ 
 & $\verb"e"_d^{32}(\boldsymbol{\sigma})$ & $\texttt{r}_d^{32}(\boldsymbol{\sigma})$ 
 & $\verb"e"_d^{40}(\boldsymbol{\sigma})$ & $\texttt{r}_d^{40}(\boldsymbol{\sigma})$ \cr
\hline

$8$ & 1.46e$-$02 & $-$ & 4.41e$-$01 & $-$ & 4.89e$-$01 & $-$ & 1.24e$+$00 & $-$ \cr

$16$ & 3.69e$-$04 & 5.30 & 8.02e$-$03 & 5.78 & 9.42e$-$03 & 5.70 & 5.25e$-$02 & 4.56 \cr

$32$ & 4.80e$-$06 & 6.27 & 1.30e$-$04 & 5.95 & 3.70e$-$04 & 4.67 & 1.05e$-$03 & 5.65 \cr

$40$ & 4.29e$-$07 & 5.96 & 1.19e$-$05 & 5.89 & 2.62e$-$05 & 6.53 & 9.74e$-$05 & 5.86 \cr

$56$ & 1.71e$-$07 & 5.97 & 4.77e$-$06 & 5.93 & 1.05e$-$05 & 5.91 & 3.93e$-$05 & 5.88 \cr

$64$ & 7.68e$-$08 & 5.98 & 2.16e$-$06 & 5.94 & 4.77e$-$06 & 5.93 & 1.79e$-$05 & 5.90 \cr

\end{tabular}
\caption{Convergence of the DG method in $\bsig$ for different wave numbers ($k=6$, $\texttt{a}=100$).}
\label{table:sDG6}
\end{center}
\end{table}

\begin{table}[!t]
\footnotesize
\singlespacing
\tabcolsep=0.12cm
\begin{center}
\begin{tabular}{c | cc | cc | cc | cc  }
 $1/h$  & $\verb"e"_d^{16}(\br)$ & $\texttt{r}_d^{16}(\br)$ & $\verb"e"_d^{28}(\br)$ 
 & $\texttt{r}_d^{28}(\br)$ & $\verb"e"_d^{32}(\br)$ & $\texttt{r}_d^{32}(\br)$ 
 & $\verb"e"_d^{40}(\br)$ & $\texttt{r}_d^{40}(\br)$ \cr
\hline

$8$ & 8.13e$-$03 & $-$ & 2.08e$+$00 & $-$ & 2.65e$+$00 & $-$ & 3.93e$+$00 & $-$ \cr

$16$ & 8.08e$-$04 & 6.65 & 2.49e$-$02 & 6.38 & 3.09e$-$02 & 6.42 & 1.67e$-$01 & 4.56 \cr

$32$ & 9.71e$-$06 & 6.38 & 2.62e$-$04 & 6.58 & 7.85e$-$04 & 5.30 & 2.09e$-$03 & 6.32 \cr

$40$ & 8.68e$-$07 & 5.96 & 2.40e$-$05 & 5.89 & 5.29e$-$05 & 6.65 & 1.96e$-$04 & 5.84 \cr

$56$ & 3.45e$-$07 & 5.97 & 9.64e$-$06 & 5.93 & 2.13e$-$05 & 5.91 & 7.94e$-$05 & 5.87 \cr

$64$ & 1.55e$-$07 & 5.99 & 4.36e$-$06 & 5.94 & 9.64e$-$06 & 5.93 & 3.61e$-$05 & 5.90 \cr

\end{tabular}
\caption{Convergence of the DG method in  $\br$ for different wave numbers ($k=6$, $\texttt{a}=100$).}
\label{table:rDG6}
\end{center}
\end{table}

To test the locking-free character of the method in the nearly incompressible case, 
we consider now Lam\'e coefficients $\lambda$ and $\mu$ corresponding to  
a Poisson ratio $\nu=0.499$ and a Young modulus $E = 10$. 
We fix the  polynomial degree to $k=2$, take a stabilization parameter $\texttt{a} = 50$ and 
report in Tables \ref{table:s_incompressible} and \ref{table:r_incompressible} 
the experimental rates of convergence for $\kappa=4,8,16,32$. We observe that 
the method is thoroughly robust for nearly incompressible materials. However, 
it seems that the pre-asymptotic region increases  in this case for big values of $\kappa$.

We now study the influence of $\kappa$ on the choice of the stabilization parameter 
$\texttt{a}$ of the discontinuous Galerkin scheme \eqref{DGshort}. To this end,  we present 
in Figure \ref{fig:a0k}  different approximations 
corresponding to $\kappa=2,4,8,16,32$, obtained with the mesh $h=1/32$ and a polynomial degree $k=3$. 
In each case, we represent in a double logarithmic scale the errors versus the parameter 
$\texttt{a}$. Clearly, $\texttt{a}$ is not sensible to the variations of $\kappa$. However, 
higher polynomial degrees $k$ require higher values for the stabilization parameter   
$\texttt{a}$. This is made clear in Figure \ref{fig:a0m} where the  polynomial degrees   
 $k=1,\cdots,7$ are considered on a fixed mesh $h=1/32$, with a fixed wave number  
$\kappa=16$. In each case,  the errors  are depicted versus  
$\texttt{a}$. 

\begin{table}[!t]
\footnotesize
\singlespacing
\tabcolsep=0.12cm
\begin{center}
\begin{tabular}{c | cc | cc | cc | cc }
 $1/h$  &  $\verb"e"_d^4(\boldsymbol{\sigma})$ & $\texttt{r}_d^4(\boldsymbol{\sigma})$ 
 & $\verb"e"_d^8(\boldsymbol{\sigma})$ & $\texttt{r}_d^8(\boldsymbol{\sigma})$ & $\verb"e"_d^{16}(\boldsymbol{\sigma})$ 
 & $\texttt{r}_d^{16}(\boldsymbol{\sigma})$ & $\verb"e"_d^{32}(\boldsymbol{\sigma})$ 
 & $\texttt{r}_d^{32}(\boldsymbol{\sigma})$ \cr
\hline

$8$ & 6.90e$-$02 & $-$ & 3.27e$-$01 & $-$ & 3.58e$-$02 & $-$ & 1.80e$-$02 & $-$ \cr

$16$ & 1.80e$-$02 & 1.94 & 6.83e$-$02 & 2.26 & 3.27e$-$01 & $-$ & 1.83e$-$02 & $-$ \cr

$32$ & 4.54e$-$03 & 1.99 & 1.78e$-$02 & 1.94 & 6.81e$-$02 & 2.27 & 3.27e$-$01 & $-$ \cr

$64$ & 1.14e$-$03 & 2.00 & 4.49e$-$03 & 1.99 & 1.77e$-$02 & 1.94 & 6.80e$-$02 & 2.27 \cr

$128$ & 2.84e$-$04 & 2.00 & 1.13e$-$03 & 2.00 & 4.48e$-$03 & 1.99 & 1.77e$-$02 & 1.94 \cr

\end{tabular}
\caption{Convergence of the DG method in $\bsig$ for different wave numbers ($k = 2$, $\texttt{a}=50$, $\nu = 0.499$).}
\label{table:s_incompressible}
\end{center}
\end{table}

\begin{table}[!t]
\footnotesize
\singlespacing
\tabcolsep=0.12cm
\begin{center}
\begin{tabular}{c | cc | cc | cc | cc }
 $1/h$  &  $\verb"e"_d^4(\br)$ & $\texttt{r}_d^4(\br)$ & $\verb"e"_d^8(\br)$ 
 & $\texttt{r}_d^8(\br)$ & $\verb"e"_d^{16}(\br)$ & $\texttt{r}_d^{16}(\br)$ 
 & $\verb"e"_d^{32}(\br)$ & $\texttt{r}_d^{32}(\br)$ \cr
\hline

$8$ & 1.00e$+$00 & $-$ & 1.15e$+$01 & $-$ & 5.19e$+$00 & $-$ & 3.60e$+$00 & $-$ \cr

$16$ & 1.38e$-$01 & 2.87 & 1.08e$-$00 & 3.42 & 1.26e$+$01 & $-$ & 2.92e$+$00 & $-$ \cr

$32$ & 2.08e$-$02 & 2.73 & 1.37e$-$01 & 2.98 & 1.13e$+$00 & 3.48 & 1.32e$+$01 & $-$ \cr

$64$ & 3.94e$-$03 & 2.40 & 2.04e$-$02 & 2.75 & 1.38e$-$01 & 3.03 & 1.16e$+$00 & 3.50 \cr

$128$ & 8.91e$-$04 & 2.15 & 3.88e$-$03 & 2.40 & 2.04e$-$02 & 2.76 & 1.39e$-$01 & 3.06 \cr

\end{tabular}
\caption{Convergence of the DG method in $\br$ for different wave numbers ($k = 2$, $\texttt{a}=50$, $\nu = 0.499$).}
\label{table:r_incompressible}
\end{center}
\end{table}

\begin{figure}[!t]
    \centering
    \subfloat[Convergence history for $\bsig$]{{\includegraphics[width=0.47\textwidth]{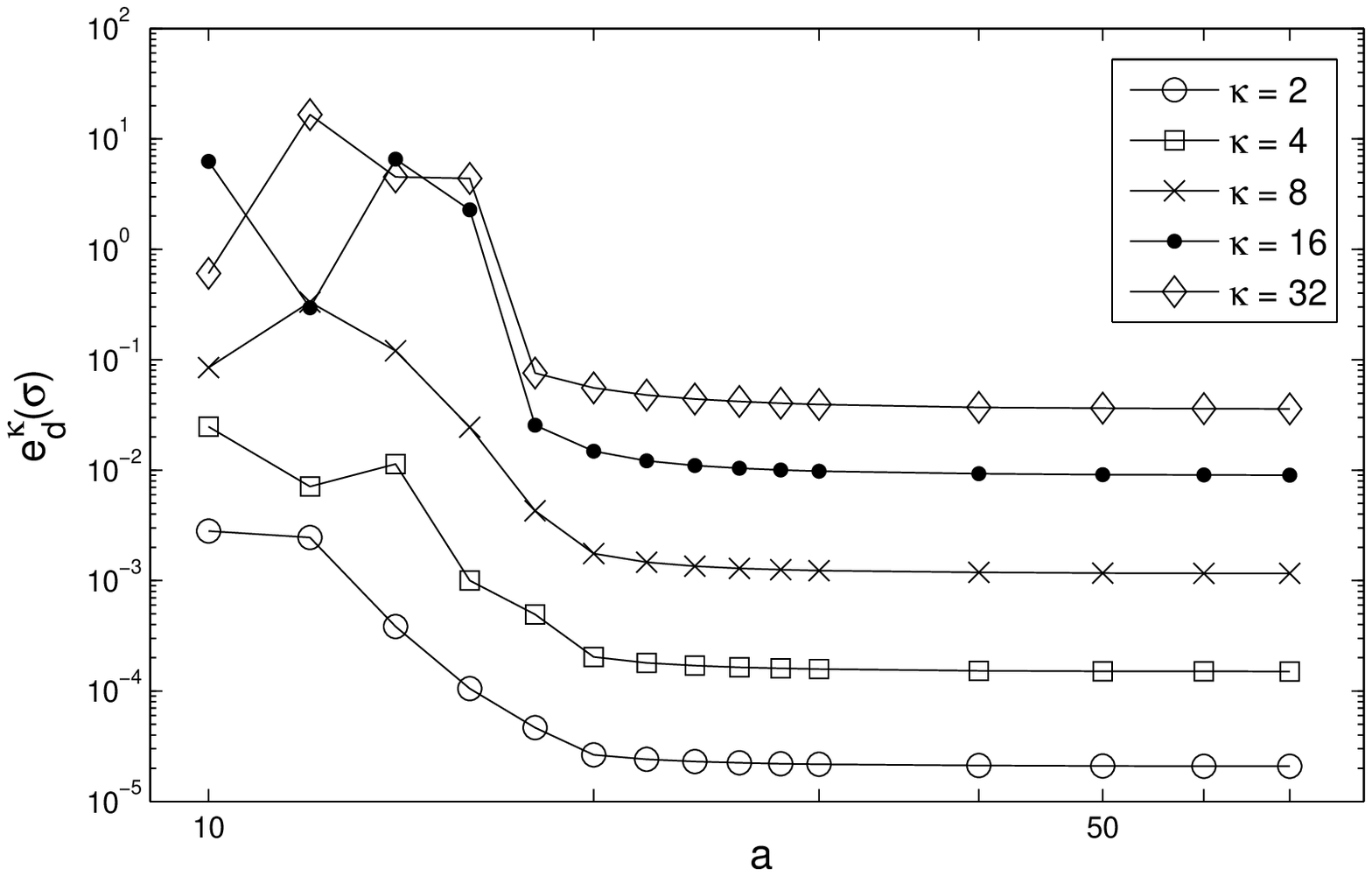}}}
    \qquad
    \subfloat[Convergence history for $\br$]{{\includegraphics[width=0.47\textwidth]{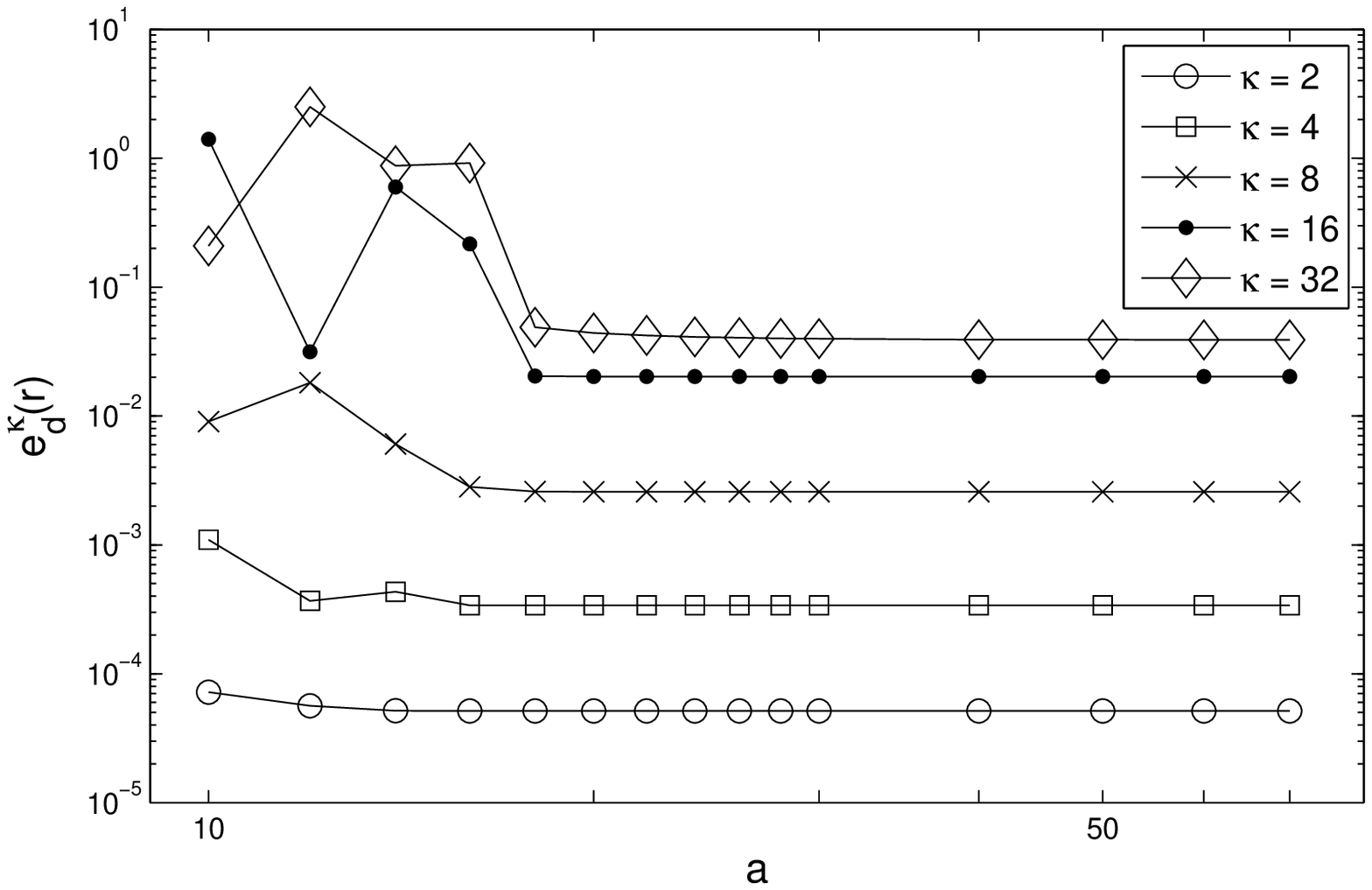}}}
    \caption{Errors of the DG method versus $\texttt{a}$  with $h= 1/32$ and $k= 3$.}%
    \label{fig:a0k}%
\end{figure}

\begin{figure}[!t]
    \centering
    \subfloat[Convergence history for $\bsig$]{{\includegraphics[width=0.47\textwidth]{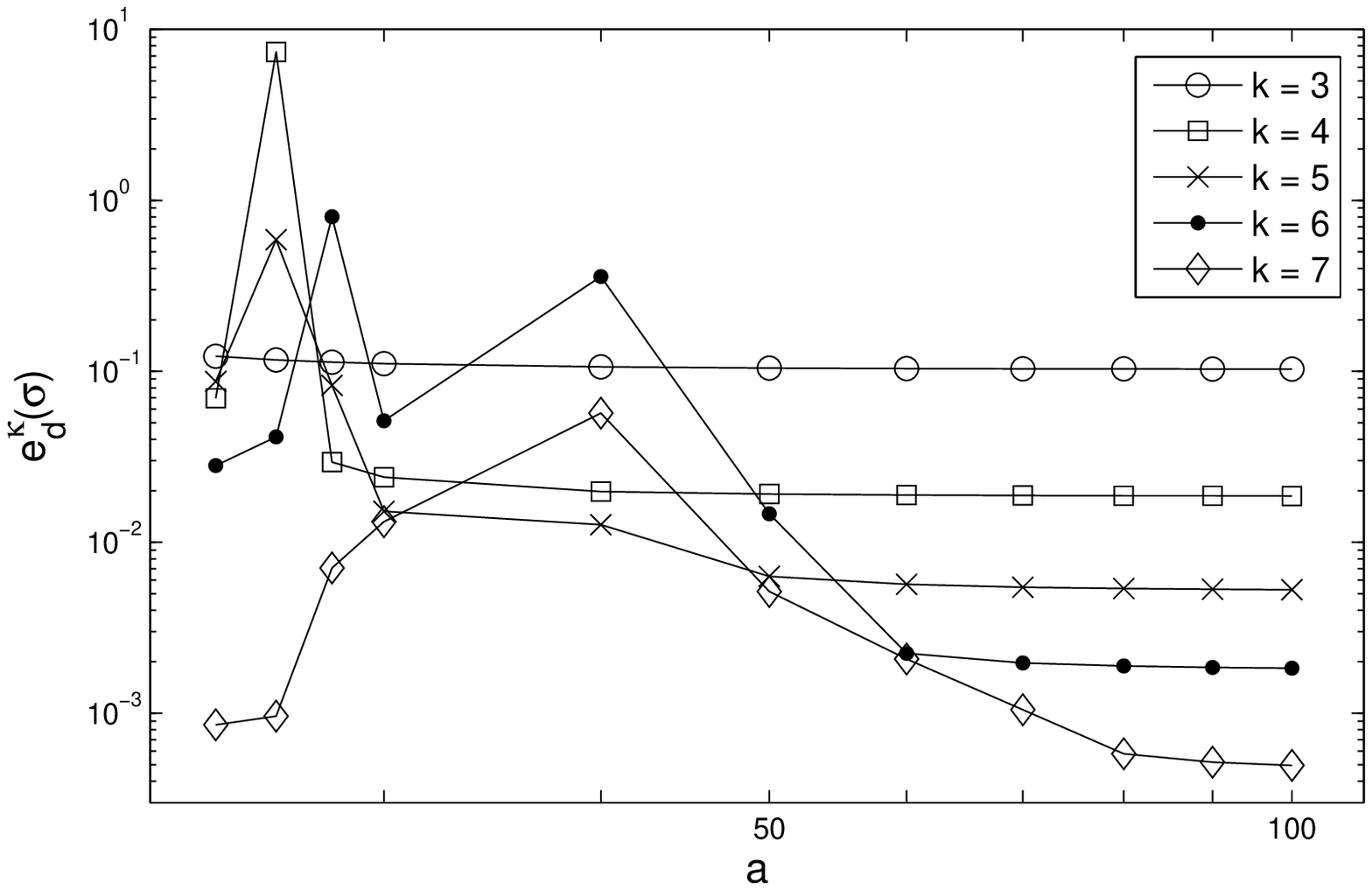}}}
    \qquad
    \subfloat[Convergence history for $\br$]{{\includegraphics[width=0.47\textwidth]{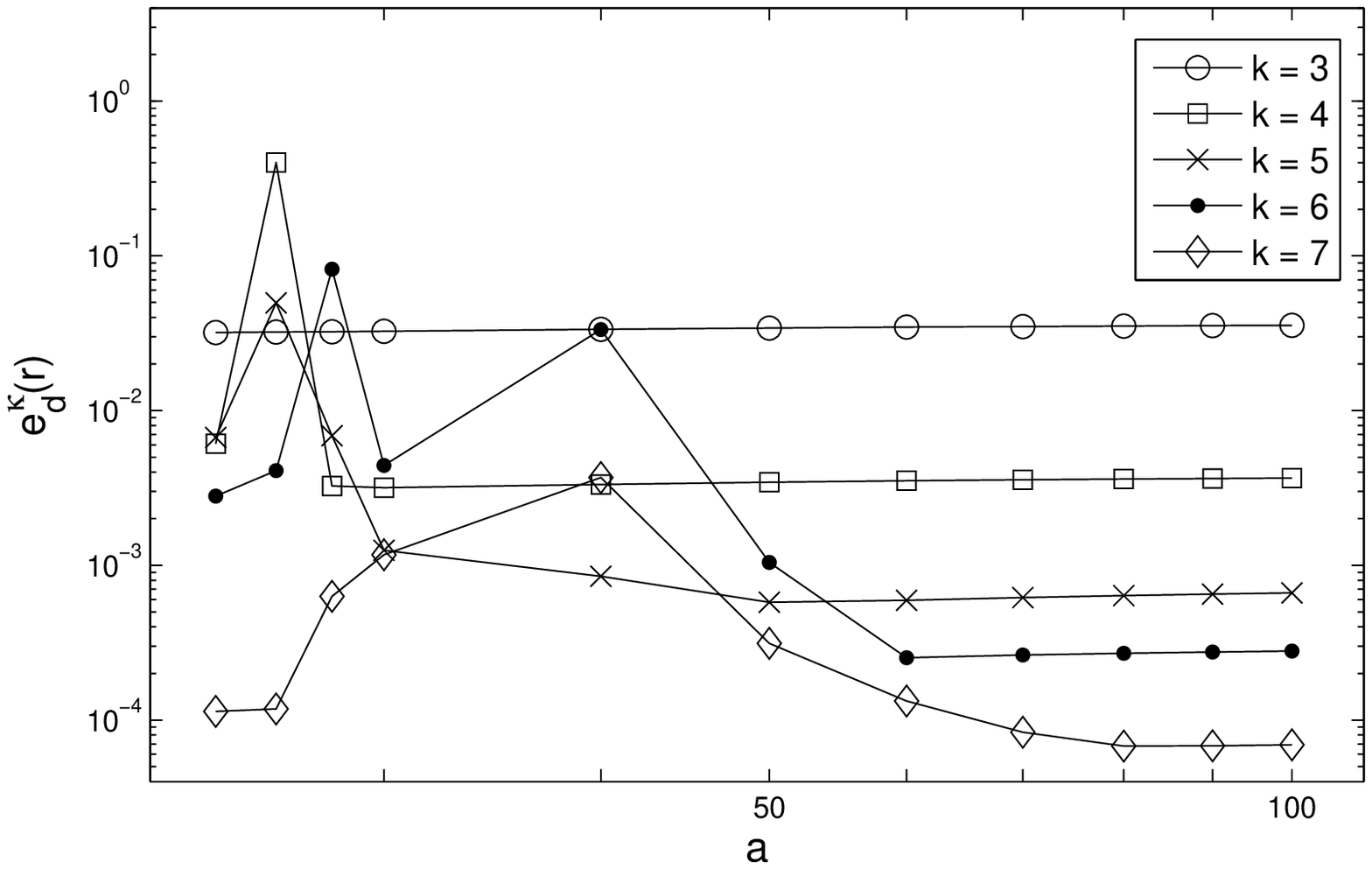}}}
    \caption{Errors of the DG method versus $\texttt{a}$  with $h=1/32$ and $\kappa$ = 16.}%
    \label{fig:a0m}%
\end{figure}


\end{document}